\newcommand{\innProd}[3]{\left\langle #1, #2 \right\rangle_{#3}}
\newcommand{\Hu}{{H_{\bsu}}}
\newcommand{\Hv}{{H_{\bsv}}}
\newcommand{\Hg}{H_{\bsgamma}}
\newcommand{\HgA}{H_{\bsgamma}^{\textup{A}}}
\newcommand{\Ltwou}{L^2(\Du, \rhou)}
\newcommand{\Ltwo}{L^2(\X, \rho^\N)}
\newcommand{\drho}{\textup{d}\rho}
\newcommand{\decg}{\textup{d}_\bsgamma}
\newcommand{\decup}{\textup{d}^{\textup{up}}}
\newcommand{\declow}{\textup{d}^{\textup{low}}}
\newcommand{\decgup}{\textup{d}^{\textup{up}}_\bsgamma}
\newcommand{\decglow}{\textup{d}^{\textup{low}}_\bsgamma}
\newcommand{\declup}{\textup{d}^{\textup{up}}_\bslambda}
\newcommand{\decllow}{\textup{d}^{\textup{low}}_\bslambda}
\newcommand{\fu}{{f_{\bsu}}}
\newcommand{\gmu}{\gamma_{\bsu}}
\newcommand{\gmv}{\gamma_{\bsv}}
\newcommand{\gv}{{g_{\bsv}}}
\newcommand{\ku}{k_{\bsu}}
\newcommand{\luj}{\lambda_{\bsu,\bsj}}
\newcommand{\rhou}{{\rho^{\bsu}}}
\newcommand{\Au}{{A_{\bsu}}}
\newcommand{\Du}{{D^{\bsu}}}
\newcommand{\Hghc}{{H_{\widehat{\bsgamma}_c}}}
\newcommand{\Su}{S_{\bsu}}
\newcommand{\Wu}{{W_{\bsu}}}
\newcommand{\UU}{{\mathcal{U}}}
\newcommand{\UUg}{\UU_{\bsgamma}}
\newcommand{\Mgeps}{M_{\bsgamma}(\varepsilon)}
\newcommand{\Mgheps}{M_{\ghc}(\varepsilon)}
\newcommand{\Mbar}{\overline{M}}
\newcommand{\gmh}{{\widehat{\gamma}}}
\newcommand{\ghc}{\widehat{\bsgamma}_c}
\newcommand{\cost}{\textup{cost}}
\newcommand{\comp}{\textup{comp}}
\newcommand{\AOpt}{\A_{\varepsilon,\bsgamma}^{\textup{opt}}}
\newcommand{\AOptc}{\A_{\varepsilon,\ghc}^{\textup{opt}}}
\newcommand{\meps}{m_{\varepsilon}}
\DeclareMathOperator{\decay}{decay}
\newcommand{\X}{{\mathfrak{X}}}
\newcommand{\W}{\mathcal{W}}
\newcommand{\Sol}{\mathcal{S}}
\newcommand{\A}{\mathcal{A}}
\newcommand{\fh}{\widehat{f}}
\newcommand{\Kg}{K_\gamma}
\newcommand{\ConvAllInf}{r_{\infty}}
\begin{document}

\title*{Infinite-Variate $L^2$-Approximation with Nested Subspace Sampling}
% Use \titlerunning{Short Title} for an abbreviated version of
% your contribution title if the original one is too long
\author{Kumar Harsha, Michael Gnewuch, and Marcin Wnuk}
% Use \authorrunning{Short Title} for an abbreviated version of
% your contribution title if the original one is too long
\institute{K. Harsha \textbullet  M.~Gnewuch \textbullet M. Wnuk \at Institute for Mathematics, Osnabr\"uck University, Albrechtra{\ss}e 28a, 49074 Osnabr\"uck, Germany\\\email{kumar.harsha@uni-osnabrueck.de}
\and M.~Gnewuch \at \email{michael.gnewuch@uni-osnabrueck.de}
\and M. Wnuk \at \email{marcin.wnuk@uni-osnabrueck.de}}
%
% Use the package "url.sty" to avoid
% problems with special characters
% used in your e-mail or web address
%
\maketitle

\abstract*{We consider $L^2$-approximation on weighted reproducing kernel Hilbert spaces of functions depending on infinitely many variables. We focus on unrestricted linear information, admitting evaluations of arbitrary continuous linear functionals. We distinguish between ANOVA and non-ANOVA spaces, where, by ANOVA spaces, we refer to function spaces whose norms are induced by an underlying ANOVA function decomposition. In ANOVA spaces, we provide an optimal algorithm to solve the approximation problem using linear information. We determine the upper and lower error bounds on the polynomial convergence rate of $n$-th minimal worst-case errors, which match if the weights decay regularly. For non-ANOVA spaces, we also establish upper and lower error bounds. Our analysis reveals that for weights with a regular and moderate decay behavior, the convergence rate of $n$-th minimal errors is strictly higher in ANOVA than in non-ANOVA spaces.}

\abstract{We consider $L^2$-approximation on weighted reproducing kernel Hilbert spaces of functions depending on infinitely many variables. We focus on unrestricted linear information, admitting evaluations of arbitrary continuous linear functionals. We distinguish between ANOVA and non-ANOVA spaces, where, by ANOVA spaces, we refer to function spaces whose norms are induced by an underlying ANOVA function decomposition. In ANOVA spaces, we provide an optimal algorithm to solve the approximation problem using linear information. We determine the upper and lower error bounds on the polynomial convergence rate of $n$-th minimal worst-case errors, which match if the weights decay regularly. For non-ANOVA spaces, we also establish upper and lower error bounds. Our analysis reveals that for weights with a regular and moderate decay behavior, the convergence rate of $n$-th minimal errors is strictly higher in ANOVA than in non-ANOVA spaces.}

\section{Introduction}
\label{sec:1}
%Use the template \emph{chapter.tex} together with the document class SVMono (monograph-type books) or SVMult (edited books) to style the various elements of your chapter content.

%Instead of simply listing headings of different levels we recommend to let every heading be followed by at least a short passage of text.  Further on please use the \LaTeX\ automatism for all your cross-references and citations. And please note that the first line of text that follows a heading is not indented, whereas the first lines of all subsequent paragraphs are.

We want to investigate $L^2$-approximation on weighted reproducing kernel Hilbert spaces (RKHSs) of functions depending on infinitely many variables. For each of
those RKHSs, there is a family of weights and a function decomposition, which determine the norm on the space. With the help of the function decomposition, it is possible to represent an infinite-variate function as an infinite sum of functions depending only on finitely many variables.

In this paper, we focus on linear algorithms that may obtain information about an input function $f$ by evaluating finitely many continuous linear functionals in $f$. The information cost of such an algorithm is the sum of the cost of all functional evaluations, which, in turn, depends on the underlying function decomposition of the weighted RKHS we are working on.
Such a situation has already been studied in \cite{Was12,WW11a}, where the considered cost model is a generalization of the one introduced in \cite{KSWW10a} for information consisting only of function evaluation (so-called standard information). In \cite{WW11a}, a rather specific approximation problem, called the $\mathcal{G}$-approximation problem, is analyzed; there, the approximation error is measured in some very special Hilbert space $\mathcal{G}$. In \cite{Was12}, $L^2$-approximation is investigated in full generality, apart from the case that can be reduced to the $\mathcal{G}$-approximation problem.
To describe the results from \cite{Was12,WW11a} and from this paper in a more common language, we distinguish two cases of weighted RKHSs: the ones whose underlying function decomposition is an ANOVA decomposition are called ANOVA spaces, whereas the remaining RKHSs are referred to as non-ANOVA spaces (for a rigorous definition, see Section~\ref{subsed:weighted_fct_spaces}).

The case of ANOVA spaces was covered by Wasilkowski and Wo\'{z}niakowski, who in \cite{WW11a} exactly determined the polynomial convergence rates $r_\infty$ of $n$-th minimal errors (for a definition, see Section \ref{subsec:min_errors}). For the non-ANOVA spaces, Wasilkowski derived in \cite{Was12} a lower bound for $r_\infty$; with the help of auxiliary weights, he re-used the constructive results from \cite{WW11a}. He also proved a matching upper bound. Interestingly, it turns out that for slowly decaying weights, the convergence rate in the ANOVA case is superior to the one in the non-ANOVA case, while it stayed the same if the weights decrease quickly enough.

%In this paper we consider a different cost model which is less generous than in \cite{Was12, WW11a}. 
%For this cost model, which we call nested subspace sampling model, we are able to derive the exact polynomial convergence rate $r_\infty$ of $n$-th minimal errors in the ANOVA case, see Theorem~\ref{th:ANOVA_rate}.
The cost model used in \cite{Was12, WW11a} is unrestricted subspace sampling (see \RefSec{subsec:algos_cost}). In this paper, we instead consider the nested subspace sampling cost model, which is less generous than the unrestricted one. For the nested subspace sampling cost model, we have derived lower and upper bounds on the polynomial convergence rate $r_\infty$ of $n$-th minimal errors in the ANOVA case, see Theorem~\ref{th:ANOVA_rate}, where the bounds match if the weights exhibit a regular decay behavior. In the non-ANOVA case, we are able to provide upper and constructive lower bounds on $r_\infty$, see Propositions~\ref{pr:nA_low_err} and \ref{Prop:low_bound_non-ANOVA}. Although these bounds do not match in general, they show that in some cases the convergence rate in the ANOVA case is higher than the one in the non-ANOVA case, cf. Corollary~\ref{cor:comparison}. The reason for that is mainly that the cost of an algorithm may depend on what kind of function decomposition underlies the function space it is defined on. This dependence, unfortunately, prevents us from simply using the embedding machinery developed in \cite{HR13, GHHR17, GHHRW19} to transfer results for ANOVA spaces directly to spaces with a different underlying function decomposition.

% In this approach, we assume that the cost of one function evaluation depends on the number of "active" variables. For a more precise explanation, let $\bsx = [x_1,x_2,\dots]$ be an infinite-variate point, and we want to compute $f(\bsx)$. Then, the "active" variables in $\bsx$ are all non-zero components $x_j$, so the cost depends on the size of the active variables. In this paper, we instead consider the nested subspace sampling cost model, where the cost of evaluating $f(\bsx)$ depends on the importance (or weights) of the variables in the active set. More specifically, including lower-importance variables in the active set increases the corresponding evaluation cost. Consequently, the unrestricted sampling model is more generous than the nested one.

%This problem has been studied in \cite{Was12,WW11a,WW11b}, 
%where the upper error bounds have been achieved using multivariate decomposition methods (fka changing dimension algorithms). 
%We analyze the problem for a different cost model that is for some applications more appropriate.

\textbf{Notation} For a set $\bsu$, we denote by $\abs{\bsu}$ its cardinality. We use the shorthand $[k] := \{1,\dots,k\}$ for every $k \in \N$.

For two non-negative functions $f,g : D \to \R$, we use $f \lesssim g$ to denote that there exists a constant $C > 0$ such that $f(x) \leq Cg(x)$ for all $x \in D$. Likewise, we use $f \gtrsim g$ to denote that there exists $c > 0$ such that $f(x) \geq cg(x)$. Finally, we use $f \asymp g$ to denote that there exist two constants $c_1,c_2 > 0$ such that $c_1g(x) \leq f(x) \leq c_2g(x)$ for all $x$. The notation also applies to sequences.

\section{Function Spaces and  Algorithms}\label{sec:fn_spaces}

For general results on reproducing kernel Hilbert spaces (RKHS) we refer to \cite{Aro50}, and to \cite{GMR14} for results on the function spaces discussed below.

\subsection{Weighted Function Spaces}\label{subsed:weighted_fct_spaces}

Let $D\neq \emptyset$ be an arbitrary set, $\Sigma$ a $\sigma$-algebra on $D$, and $\rho: \Sigma \to [0,1]$  a probability measure on $D$. 
Let $k:D\times D\to \R$ be a reproducing kernel on $D$ such that 
\begin{equation*}
    H(1) \cap H(k)= \{0\};
    %\hspace{3ex}\text{and}\hspace{3ex} 
    %H(1+k) \subseteq L_2(D,\rho, \R);
\end{equation*} 
here and in the whole paper we follow the convention to denote by $H(k)$ the uniquely determined Hilbert space with reproducing kernel $k$ and its norm by $\|\cdot \|_{H(k)}$; in particular $H(1)$ is the space of all  constant functions on the corresponding domain $D$. Put 
$$ \UU := \{ \bsu \subset \N \,|\, \abs{\bsu} < \infty \}, $$ 
where $\abs{\bsu}$ refers to the cardinality of $\bsu$. For $\bsu \in \UU$ we define $\ku: D^\N \times D^\N\to \R$ by 
\begin{equation*}
    \ku(\bsx, \bsy) := \prod_{j\in \bsu} k(x_j, y_j) \hspace{3ex}\text{for all $\bsx, \bsy \in D^\N$.}
\end{equation*}
Since $\ku$ only depends on the variables $x_j$, $j\in \bsu$, we may alternatively view it as a function on $\Du$ by putting
$$
\ku(\bsx_\bsu, \bsy_\bsu) := \ku(\tilde{\bsx}, \tilde{\bsy})
$$
for arbitrary $\tilde{\bsx}, \tilde{\bsy}\in D^\N$ satisfying $\tilde{\bsx}_\bsu = \bsx_\bsu$, $\tilde{\bsy}_\bsu = \bsy_\bsu$, where $\bsx_\bsu := \{ x_j\}_{j \in \bsu} \in \Du$. We use an analogous convention for arbitrary functions on $D^\N$ that only depend on variables $x_j$, $j\in \bsu$.
We denote the RKHS with kernel $\ku$ by $\Hu := H(\ku)$ and its norm by $\|\cdot \|_{\Hu}$; in particular, we have $H_\emptyset = H(1)$ due to the convention that an empty product is equal to $1$.

For a family $\bsgamma = \{ \gmu \}_{\bsu \in \UU}$ of non-negative weights, we put
$$ \UU_\bsgamma := \{ \bsu \in \UU \,|\, \gmu >0\}. $$
We work with product weights which are given by a decreasing sequence of weights $1 \geq \gamma_1 \geq \gamma_2 \geq ... \geq 0$ assigned to all the variables. Then the weight (or importance) of an interacting set of variables $\bsu \in \UUg$ is given by $\gamma_\bsu = \prod_{j\in \bsu} \gamma_j$.
Moreover, we define
\begin{equation*}
    \X := \bigg\{ \bsx \in D^{\N}\,\bigg|\, \sum_{\bsu \in \UUg} \gmu k_\bsu(\bsx,\bsx) < \infty \bigg\}
\end{equation*}
and the reproducing kernel $K_\bsgamma: \X \times \X \to \R$ by
\begin{equation} \label{eq:Kg_decomp}
    K_\bsgamma(\bsx,\bsy) := \sum_{\bsu \in \UUg} \gmu \ku(\bsx,\bsy) \hspace{2ex}\bsx,\bsy\in \X.
    %\prod^\infty_{j=1} (1+\gamma_j k(x_j,y_j)) 
\end{equation}
We denote the corresponding RKHS by $\Hg := H(K_\bsgamma)$. $\Hg$ decomposes into an orthogonal sum of subspaces
\begin{equation}\label{eq:Hg_decomp}
    \Hg = \bigoplus_{\bsu \in \UUg} H(\gmu k_\bsu). %= \bigoplus_{\bsu \in \UUg} \gmu \Hu.
\end{equation}
Note that for $\bsu \in \UUg$ we have that $H(\gmu \ku)$ and $\Hu$ are equal as vector spaces, but their norms differ by a factor $\gmu^{-1/2}$. Hence, the norm in $\Hg$ is given by 
\begin{equation*}
    \norm{f}_{\Hg}^2 = \sum_{\bsu \in \UU_\bsgamma} \gmu^{-1} \norm{\fu}_{\Hu}^2,
\end{equation*}
where 
\begin{equation}\label{eq:fn_decomp}
    f = \sum_{\bsu \in \UU_\bsgamma} \fu, \hspace{3ex} \fu \in \Hu,
\end{equation}
is the uniquely defined function decomposition of $f$ in $\Hg$.

% Let $ C_0 := \sup_{f \in H(k)} \frac{\norm{f}_{L^2(D,\rho)}}{\norm{f}_{H(k)}}. $
We assume 
\begin{equation}\label{eq:assump1}
    \int_D k(x,x) \,\drho(x) < \infty.
\end{equation}
It is well-known that \eqref{eq:assump1} implies that $S : H(1+k) \to L^2(D,\rho)$ is a compact operator. We denote the corresponding operator norm by $\norm{S}$. From this, we also get the compact mapping $\Hu \to L^2(\Du, \rhou)$ for all $\bsu \in \UU_\bsgamma$. Furthermore, we assume
\begin{equation}\label{eq:assump2}
    \lim_{j \to \infty} \gamma_{\bsu_j} \norm{S}^{2\abs{\bsu_j}} = 0,
\end{equation}
where $\{ \bsu_j \}_{j \in \N}$ is some enumeration of the elements of $\UUg$. It follows from \eqref{eq:assump2} that the map $\Hg \to L^2(\X, \rho^\N)$ is also compact, see \cite{Wnuk22}[Theorem~2]. In particular, the \emph{solution operator} we consider
\begin{equation*}
    \Sol: \Hg \to L^2(\X, \rho^\N)\,,\, f \mapsto f
\end{equation*}
is well-defined and continuous.
% Let $$P_\bsu: \Hg \to H(\gmu k_\bsu)$$ denote an orthogonal projection operator.

\begin{definition}[ANOVA kernels]\label{def:ANOVA_kern}
    Let $k : D \times D \to \R$ be a reproducing kernel such that $k(\cdot,x) \in L^2(D,\rho)$ for all $x \in D$. If the kernel satisfies the \emph{ANOVA condition}
    \begin{equation}\label{eq:anova_cond}
        \int_D k(y,x) \drho(y) = 0 \quad \forall x \in D ,
    \end{equation}
    then we call $k$ an \emph{ANOVA kernel}. Otherwise, we call it a \emph{non-ANOVA kernel}.
\end{definition}

\begin{remark}\label{re:ANOVA}
    \begin{enumerate}
        \item It is easily seen that under the assumptions of Definition \ref{def:ANOVA_kern}, the ANOVA condition \eqref{eq:anova_cond} is equivalent to the condition that $\int_D f(y) \rho(y) = 0$ for all $f \in H(k)$.
        
        \item Consider the reproducing kernel as in \eqref{eq:Kg_decomp} based on an ANOVA kernel $k:D\times D \to \R$. Then for each $f \in \Hg$, the function decomposition as in \eqref{eq:fn_decomp} is the infinite-variate ANOVA decomposition, see \cite[Remark~2.12]{BG14}. For instance, it implies that for each $f \in \Hg$, for all $\bsu \in \UU$ and $\bsx \in D^{\N}$
        \begin{equation*}
            \int_D \fu (\bsx) \,\drho(x_j) = 0 \hspace{3ex}\text{if $j\in u$.}
        \end{equation*}
    \end{enumerate}
\end{remark}

Examples of ANOVA spaces include Korobov spaces, see e.g. \cite[Appendix~A.1]{NW08} and 
unanchored Sobolev spaces, see e.g. \cite[Appendix~A.2.1,A.2.3]{NW08},\cite[Section~5]{BG14}, and \cite[Section~5]{DG14b}.
%Hermite spaces, Haar-Wavelet spaces. 
A notable example of non-ANOVA spaces are anchored Sobolev spaces, see e.g. \cite[Appendix~A.2.2]{NW08} or \cite{KSWW10a}.

\subsection{Algorithms, Cost Models, and Errors}\label{sec:ACME}

\subsubsection{Admissible algorithms and cost}\label{subsec:algos_cost}

We know that non-adaptive algorithms are as good as adaptive algorithms in the class of all deterministic algorithms for linear problems $S : F \to G$ where $F$ is a pre-Hilbert space, see \cite[Theorem~4.5]{NW08}; for a precise definition of deterministic algorithms, see \cite{TWW88}. Moreover, we know that linear algorithms are optimal for the aforementioned linear problems, see \cite[Theorem~4.8]{NW08}. Therefore, we may confine ourselves to linear algorithms using arbitrary linear information of the form
\begin{equation}\label{eq:lin_algo_all}
   \A(f) = \sum_{j=1}^{n} L_{\bsu_j} (f) \cdot g_j
\end{equation}
for some $n \in \N$, $g_j \in \Ltwo$, index sets $\bsu_j \in \UUg$, and linear functionals $L_{\bsu_j} \in \Hg^\ast$ such that $L_{\bsu_j}\vert_{\Hv}=0$ if $\bsv \nsubseteq \bsu_j$, see \cite{TWW88}. Owing to the Riesz representation theorem, we can define the linear functional $L_{\bsu_j}$ as inner product with a representer as
\begin{equation}\label{eq:info_fnl}
    L_{\bsu_j}(f) := \innProd{f}{\eta_{\bsu_j}}{\Hg}, \quad \text{where } \eta_{\bsu_j} \in \bigoplus_{\bsv \subseteq \bsu_j} H_{\bsv}.
\end{equation}

The worst-case error of the algorithm $\A$ for $L^2$-approximation with respect to $\Hg$ is given as
\begin{equation*}
   e(\A, \Hg) := e(\A, \Sol, \Hg) := \sup_{\|f\|_{\Hg} \le 1} \| \Sol(f)- \A(f) \|_{L^2(\X, \rho^\N)}.
\end{equation*}

Let us fix a cost function $\$ : \N_0 \to [1,\infty)$ which is non-decreasing. Consider a linear functional $L_\bsu$ given by $L_\bsu := \innProd{\cdot}{\eta_\bsu}{\Hg}$, where $\eta_\bsu \in \bigoplus_{\bsv \subseteq \bsu} H_{\bsv}$ but there exists no $\bsw \subsetneq \bsu$ such that $\eta_\bsu \in \bigoplus_{\bsv \subseteq \bsw} H_{\bsv}$. Then the cost of $L_\bsu$ in the \emph{nested subspace sampling} (NSS) model (c.f. \cite[Sect.~5.1]{GHHR17} or \cite{MR09, HMNR10, BG14, DG14a}) is given by
\begin{equation*}
    \cost (L_\bsu) = \$(\max \bsu),
\end{equation*}
where, by convention, $\max \emptyset = 0$. Furthermore, the cost of an algorithm like \eqref{eq:lin_algo_all} in the NSS model is given by
\begin{equation*}
    \cost(\A) = \sum_{j=1}^n \cost(L_{\bsu_{j}}).
\end{equation*}

For comparison, the cost of evaluating a linear functional in the unrestricted subspace sampling model as in \cite{WW11a,Was12} depends on the number of "active variables". More precisely, for the functional $L_\bsu$ considered above, the corresponding cost is given by
$$ \cost^\ast(L_\bsu) = \$(\abs{\bsu}). $$

\subsubsection{Minimal errors and complexity}\label{subsec:min_errors}

For the results in this paper, we work with the NSS cost model unless specified otherwise. On that basis, we define the \emph{$n$-th minimal error} for $n \in \N$ as
\begin{equation*}
    e(n,\Hg) := \inf \{ e(\A,\Hg)\ :\ \A \text{ as in \eqref{eq:lin_algo_all}} \text{ and } \cost(\A) \leq n \}.
\end{equation*}
The approximation problem is said to be \emph{strongly tractable} if and only if there exist non-negative constants $c,\alpha > 0$ such that for all $n \in \N$, it holds that
$$ e(n,\Hg) \leq c \cdot n^{-\alpha}. $$
In order to quantify convergence rates, we introduce the notion of decay rates. The \emph{lower polynomial decay rate} of a null sequence of positive reals $\bsx := \{ x_j \}_{j \in \N}$ is given by
\begin{equation}\label{def:decay}
    \declow_{\bsx} := \decay(\bsx) = \sup \left\{ \alpha \geq 0 : \sum_{j \in \N} x_j^{1/\alpha} < \infty \right\}.
\end{equation}
This leads to the definition of the \emph{polynomial convergence rate of $n$-th minimal errors}, c.f. \cite[Sec~4]{GHHR17},
\begin{equation}\label{eq:conv_rate}
    \ConvAllInf := \ConvAllInf(\Hg) = \decay(\{e(n,\Hg)\}_{n \in \N}),
\end{equation}
which is the quantity of interest in later sections. Since $e(n,\Hg)$ forms a non-increasing sequence, the definition \eqref{eq:conv_rate} is equivalent to
$$ \ConvAllInf(\Hg) = \sup \left\{ \alpha \geq 0 : \lim_{n \to \infty} e(n,\Hg) n^\alpha = 0 \right\}, $$
as mentioned without a proof in \cite{FHW12}; for the reader's convenience, we present a proof in \RefLem{le:dec_low_equiv}. We define the \emph{upper polynomial decay rate} of a null sequence of positive reals $\{x_n\}_{n \in \N}$ as
\begin{equation}\label{def:decay_up}
    \decup_{\bsx} := \inf \left\{ \alpha \geq 0 : \inf_{n \in \N} x_n n^\alpha > 0  \right\}.
\end{equation}
We show that $\declow_{\bsx} \leq \decup_{\bsx}$ in general, see \RefLem{le:dec_up_low}. Although one can construct sequences such that $\declow_{\bsx} < \decup_{\bsx}$, equality holds for most practical choices of sequences; confer Remark \ref{re:decay_ex}. In any case, we highlight the difference in the lower and upper decay rates in all results, except \RefCol{cor:comparison}.

We now introduce complexity concepts which also appear in tractability studies as another perspective on the minimal errors. Let $\varepsilon > 0$ be the error demand. The worst case \emph{$\varepsilon$-information complexity}, also referred to as \emph{$\varepsilon$-complexity} of function approximation is defined as, c.f. \cite{WW11a},
\begin{equation*}
    \comp(\varepsilon,\Hg) := \inf \left\{ \cost(\A)\ :\ \A \text{ as in \eqref{eq:lin_algo_all}} \text{ and } e(\A,\Hg) \leq \varepsilon \right\}.
\end{equation*}
The approximation problem is strongly tractable if and only if there exist non-negative constants $c,\alpha > 0$ such that
$$ \comp(\varepsilon,\Hg) \leq c \cdot \varepsilon^{-\alpha}. $$
The \emph{exponent of tractability} is defined as
\begin{equation}\label{def:exp_tract}
    p_{\infty} := p_{\infty}(\Hg) = \inf \left\{ \alpha \geq 0:\exists C_\alpha > 0 \forall \varepsilon \in (0,1)\  \textup{comp}(\varepsilon,\Hg) \leq C_\alpha \varepsilon^{-\alpha} \right\}.
\end{equation}
We can relate the exponent of tractability to the convergence rate of the $n$-th minimal error as $ r_\infty = 1/p_{\infty}$.

\subsubsection{Univariate and Multivariate $L^2$-Approximation}

Consider the univariate solution operator $S: H(1+k) \to L^2(D,\rho), f \mapsto f$. It follows from \eqref{eq:assump1} that $S$ is compact. This means that the self-adjoint operator $W := S^\ast \circ S$ is compact and positive definite. Consequently, $W$ has a null sequence of positive eigenvalues $\bslambda := \{ \lambda_j \}_{j \in \N}$ with a corresponding set of eigenfunctions $\{ \eta_j \}_{j \in \N}$ that form a complete orthonormal system of $H(1+k)$ satisfying
\begin{equation}\label{eq:ortho_H}
    W(\eta_j) = \lambda_j \eta_j.
\end{equation}
We assume that the eigenvalues are ordered, i.e. $ \lambda_1 \geq \lambda_2 \geq \dots$.

\begin{remark}
    We provide three examples that exhibit a sharp decay rate for the eigenvalues $\bslambda$. For the approximation problem defined on a Korobov space of smoothness $\alpha$, it can be checked that the eigenvalues behave as $\lambda_j \asymp j^{-2\alpha}$; see \cite[p~341]{NW08}. On the classical Sobolev space, the eigenvalues follow $\lambda_j \asymp j^{-2}$; see \cite[p~410]{WW11a}. In both of these cases, $\decup_{\bslambda} = \declow_{\bslambda}$.

    Another example, which recently captured a lot of interest, is Hermite spaces with the reproducing kernel $k(x,y) = \sum_{\nu \in \N_0} \alpha_\nu^{-1} h_\nu(x) h_\nu(y)$ for $x,y \in \R$, where the Fourier weights $\alpha_\nu$ are essentially of the form $\alpha_\nu := (\nu+1)^r$ for some $r > 1/2$ (PG), or $\alpha_\nu := 2^{r\nu^b}$ for some $b,r > 0$ (EG). Furthermore, $h_\nu$ denotes the $L^2$-normalized Hermite polynomial of degree $\nu$. The first paper considering spaces of this type was \cite{IKP16}; for more recent results, see \cite{GHRR23} and the literature mentioned therein. Here, the eigenvalues are $\lambda_j = \alpha_{j+1}^{-1}$ for $j \in \N$; see, e.g. \cite[Section~4.1.2]{GHRR23}. Hence, we have $\declup = r = \decllow$ in the case (PG) and $\declup = \infty = \decllow$ in the case (EG).
\end{remark}

Let $\declow_\bslambda$ denote the lower decay rate of the eigenvalues $\bslambda$ of $W$. We assume that $\declow_\bslambda > 1$ so that \eqref{eq:assump1} is satisfied, since $\int_D k(x,x) \,\drho(x) = \sum_{j \in \N} \lambda_j$. Similarly, as in \eqref{def:decay}, we may define the convergence rate of the $n$-th minimal errors of the univariate approximation problem as
\begin{equation*}
    r_1 := r_1(H(1+k)) = \decay(\{e(n,H(1+k))\}_{n \in \N}).
\end{equation*}
We also know that $e(n,H(1+k)) = \sqrt{\lambda_{n+1}}$, which means, c.f. \cite[p~314]{Was12},
$$r_1 = \decllow/2.$$

Let $\Su : \Hu \to L^2(D^{\bsu},\rhou)$ be given by $\Su(f) = f$ for all $f \in \Hu$. Owing to the tensor product construction of spaces $\Hu$ for $\bsu \in \UUg$, the eigenpairs of the operator $\Wu := \Su^\ast \circ \Su : \Hu \to \Hu$ are of product form. In particular, for $\bsu = \{ u_1, u_2, \dots, u_k \} \in \UU$ with $k = \abs{\bsu}$, and for all $\bsj = ( j_1, j_2, \dots, j_k ) \in \N^k$, we define
\begin{equation*}
    \lambda_{\bsu,\bsj} := \prod_{i=1}^k \lambda_{j_i} \quad \text{and} \quad \eta_{\bsu,\bsj}(\bsx) := \prod_{i=1}^k \eta_{j_i}(x_{u_i}),
\end{equation*}
using the eigenvalues and eigenfunctions of the univariate operator as in \eqref{eq:ortho_H}. Then
\begin{equation}\label{eq:ortho_Hu}
	\Wu(\eta_{\bsu,\bsj}) = \lambda_{\bsu,\bsj} \eta_{\bsu,\bsj} ,
\end{equation}
and $\innProd{\eta_{\bsu,\bsi}}{\eta_{\bsu,\bsj}}{\Hu} = \delta_{\bsi,\bsj}$ for $\bsi, \bsj \in \N^{k}$. Using the eigenpairs of $\Wu$, we can express the SVD of $\Su$ as
\begin{equation}\label{eq:svd_Su}
    \Su (\fu) = \sum_{\bsj \in \N^{\bsu}} \sqrt{\lambda_{\bsu,\bsj}} \innProd{\fu}{\eta_{\bsu,\bsj}}{\Hu} \cdot \bar{\eta}_{\bsu,\bsj},
\end{equation}
where the
\begin{equation}\label{eq:etaBar}
    \bar{\eta}_{\bsu,\bsj} = \Su(\eta_{\bsu,\bsj})/\sqrt{\lambda_{\bsu,\bsj}}, \quad \bsj \in \N^k,
\end{equation}
form an orthonormal system in $\Ltwou$. In the same way as for the spaces $\Hu$, we now define a self-adjoint operator for the infinite-variate setting $$\W := \Sol^\ast \circ \Sol : \Hg \to \Hg$$ which is compact and positive definite.

\section{ANOVA spaces}\label{sec:ANOVA}

For notational clarity, we use $\Kg^\textup{A}$ and $\HgA$ to refer to the infinite-variate ANOVA kernel and the corresponding function space in this section. It is well known that in ANOVA spaces we have, see, e.g. \cite{BG14},
\begin{equation}\label{eq:anova_finite}
    \innProd{\fu}{\gv}{L^2(D^{\bsu \cup \bsv}, \rho^{\bsu \cup \bsv})} = 0 \quad\text{for } \bsu,\bsv \in \UU, \bsu \neq \bsv, \fu \in \Hu, \gv \in \Hv.
\end{equation}

\begin{lemma}\label{le:err_decomp_A}
    In the ANOVA setting, any linear continuous algorithm $\A : \HgA \to \Ltwo$ satisfying $\A(\Hu) \subseteq \Sol(\Hu) = \Hu$ for all $\bsu \in \UUg$ also satisfies
    $$ \norm{\Sol(f)-\A(f)}_{\Ltwo}^2 = \sum_{\bsu \in \UUg} \norm{S_{\bsu}(\fu) - A_{\bsu}(\fu)}_{\Ltwou}^2, $$
    where $\Au : \Hu \to \Ltwou$ and $\Au(f) = \A(f)$ for all $f \in \Hu$.
\end{lemma}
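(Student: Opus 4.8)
The statement is a Pythagorean (orthogonality) decomposition of the squared $L^2$-error into a sum over the finite index sets $\bsu\in\UUg$, and its entire content is that the cross terms vanish. The plan is to expand $f$ in its ANOVA decomposition, push both $\Sol$ and $\A$ through the resulting series using linearity and continuity, and then apply the ANOVA orthogonality relation \eqref{eq:anova_finite} to the termwise differences.

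First I would fix $f\in\HgA$ and write its unique decomposition $f=\sum_{\bsu\in\UUg}\fu$ with $\fu\in\Hu$, as in \eqref{eq:fn_decomp}, which converges in $\HgA$. Since $\Sol$ is continuous and linear, $\Sol(f)=\sum_{\bsu}\Su(\fu)$ with the series converging in $\Ltwo$ (here I use the identification $\Sol|_{\Hu}=\Su$ from the hypothesis $\Sol(\Hu)=\Hu$); likewise, since $\A$ is continuous and linear, $\A(f)=\sum_{\bsu}\A(\fu)=\sum_{\bsu}\Au(\fu)$. Subtracting, I set $g_\bsu:=\Su(\fu)-\Au(\fu)$ and obtain $\Sol(f)-\A(f)=\sum_{\bsu\in\UUg}g_\bsu$ in $\Ltwo$.

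The key observation is that every $g_\bsu$ lies in $\Hu$, viewed as a space of $L^2$-functions depending only on the coordinates indexed by $\bsu$: indeed $\Su(\fu)=\fu\in\Hu$, while $\Au(\fu)=\A(\fu)\in\Hu$ precisely by the hypothesis $\A(\Hu)\subseteq\Hu$. This is exactly where that hypothesis does the essential work, since it keeps the approximant inside $\Hu$ so that the ANOVA orthogonality becomes applicable to the difference and not just to $\fu$ itself. Consequently, for $\bsu\neq\bsv$ the functions $g_\bsu\in\Hu$ and $g_\bsv\in\Hv$ are orthogonal in $L^2$ by \eqref{eq:anova_finite}; I would also remark that the inner product in $L^2(D^{\bsu\cup\bsv},\rho^{\bsu\cup\bsv})$ appearing there coincides with the one in $\Ltwo$, because a function depending only on finitely many coordinates is integrated against a probability product measure in the remaining coordinates. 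Hence $\{g_\bsu\}_{\bsu\in\UUg}$ is a mutually orthogonal family in $\Ltwo$.

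Applying the Pythagorean theorem for convergent orthogonal series then gives $\norm{\Sol(f)-\A(f)}_{\Ltwo}^2=\sum_{\bsu\in\UUg}\norm{g_\bsu}_{\Ltwo}^2$, and since each $g_\bsu$ depends only on the coordinates in $\bsu$ one has $\norm{g_\bsu}_{\Ltwo}=\norm{\Su(\fu)-\Au(\fu)}_{\Ltwou}$, which is the claimed identity. The only point requiring care — the main, though mild, obstacle — is the rigorous justification of interchanging the continuous operators with the infinite series and of invoking Pythagoras for an infinite orthogonal family; both follow from the continuity of $\Sol$ and $\A$ together with the convergence of the ANOVA decomposition in $\HgA$, so no delicate estimate is needed.
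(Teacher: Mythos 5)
Your argument is correct and is exactly the one the paper intends: its proof consists of the single sentence that the claim ``follows directly from \eqref{eq:fn_decomp} and \eqref{eq:anova_finite}, using the fact that $\Sol(\fu)=\Su(\fu)$,'' and your write-up simply fills in the details of that same route (termwise application of $\Sol$ and $\A$, the observation that the hypothesis $\A(\Hu)\subseteq\Hu$ keeps each difference inside $\Hu$ so that \eqref{eq:anova_finite} applies, and Pythagoras for the resulting orthogonal series). No discrepancies to report.
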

\begin{proof}
    The statement follows directly from \eqref{eq:fn_decomp} and \eqref{eq:anova_finite}, using the fact that $\Sol(\fu) = \Su(\fu)$.
\end{proof}
Analogous to \cite{WW11a}, we define an approximation algorithm based on the SVD of $\Su$ from \eqref{eq:svd_Su}. Let $\delta \in (0,1)$ and define for $f \in \Hu$
\begin{equation}\label{eq:opt_algo_u}
    A_{\bsu,\delta}^\ast(f) := \sum_{\bsj \in J(\delta,\bsu)} \sqrt{\lambda_{\bsu,\bsj}} \innProd{f}{\eta_{\bsu,\bsj}}{\Hu} \cdot \bar{\eta}_{\bsu,\bsj},
\end{equation}
where $J(\delta,\bsu) := \{ \bsj : \sqrt{\luj} > \delta \}$ and $\bar{\eta}_{\bsu,\bsj}$ is given by \eqref{eq:etaBar}. We want to combine all the sub-algorithms $A_{\bsu,\delta}^\ast : \Hu \to \Ltwou$ for $\bsu \in \UUg$ to attack the approximation problem on $\Hg$. Let $\varepsilon \in (0,1)$ be the worst-case error threshold for the infinite-variate problem, and define the set
\begin{equation}\label{eq:opt_set}
    \begin{aligned}[b]
        \Mgeps &:= \bigcup_{\bsu \in \UUg} \{ (\bsu, \bsj) : \bsj \in J(\varepsilon/\sqrt{\gmu}, \bsu) \}\\
        &= \left\{(\bsu,\bsj): \bsu \in \UUg, \bsj \in \N^{\bsu}, \text { and } \gmu \lambda_{\bsu,\bsj}>\varepsilon^{2}\right\}.
    \end{aligned}
\end{equation}
We now define the algorithm $\AOpt : \HgA \to \Ltwo$ as
\begin{equation}\label{eq:opt_algo}
    \AOpt(f) := \sum_{\bsu \in \UUg} A_{\bsu,\varepsilon/\sqrt{\gmu}}^\ast(\fu) = \sum_{(\bsu,\bsj)\in M_{\bsgamma}(\varepsilon)} \innProd{f}{\eta_{\bsu,\bsj}}{\Hu} \cdot \Su(\eta_{\bsu,\bsj}).
\end{equation}

\begin{lemma}\label{le:opt_err_u}
    For any $\bsu \in \UUg$ and $\delta > 0$, the algorithm $A_{\bsu,\delta}^\ast$ from \eqref{eq:opt_algo_u} satisfies $\norm{\Su - A_{\bsu,\delta}^\ast} \leq \delta$.
\end{lemma}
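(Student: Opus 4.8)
The plan is to recognize $A_{\bsu,\delta}^\ast$ as a spectral truncation of the singular value decomposition of $\Su$ recorded in \eqref{eq:svd_Su}, and then to estimate the operator norm of the residual $\Su - A_{\bsu,\delta}^\ast$ directly on the unit ball of $\Hu$. Since the eigenfunctions $\eta_{\bsu,\bsj}$ form a complete orthonormal system of $\Hu$ by \eqref{eq:ortho_Hu}, every $f \in \Hu$ expands as $f = \sum_{\bsj} \innProd{f}{\eta_{\bsu,\bsj}}{\Hu}\, \eta_{\bsu,\bsj}$. First I would subtract the defining formula \eqref{eq:opt_algo_u} for $A_{\bsu,\delta}^\ast$ from the SVD \eqref{eq:svd_Su}; the terms with $\bsj \in J(\delta,\bsu)$ cancel, leaving
\[
    (\Su - A_{\bsu,\delta}^\ast)(f) = \sum_{\bsj \notin J(\delta,\bsu)} \sqrt{\luj}\, \innProd{f}{\eta_{\bsu,\bsj}}{\Hu}\cdot \bar{\eta}_{\bsu,\bsj},
\]
that is, the tail over the "small" singular values $\sqrt{\luj} \leq \delta$.

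Next I would compute the $\Ltwou$-norm of this image. Because the family $\{ \bar{\eta}_{\bsu,\bsj} \}$ is orthonormal in $\Ltwou$ (as noted right after \eqref{eq:etaBar}), Parseval's identity gives
\[
    \norm{(\Su - A_{\bsu,\delta}^\ast)(f)}_{\Ltwou}^2 = \sum_{\bsj \notin J(\delta,\bsu)} \luj\, \abs{\innProd{f}{\eta_{\bsu,\bsj}}{\Hu}}^2.
\]
I would then invoke the defining property of the complement of $J(\delta,\bsu)$, namely $\luj \leq \delta^2$ whenever $\bsj \notin J(\delta,\bsu)$, to factor out $\delta^2$, and finish with the Parseval/Bessel bound $\sum_{\bsj} \abs{\innProd{f}{\eta_{\bsu,\bsj}}{\Hu}}^2 = \norm{f}_{\Hu}^2$ coming from completeness of $\{ \eta_{\bsu,\bsj} \}$ in $\Hu$:
\[
    \norm{(\Su - A_{\bsu,\delta}^\ast)(f)}_{\Ltwou}^2 \leq \delta^2 \sum_{\bsj \notin J(\delta,\bsu)} \abs{\innProd{f}{\eta_{\bsu,\bsj}}{\Hu}}^2 \leq \delta^2 \norm{f}_{\Hu}^2.
\]
Taking the supremum over $\norm{f}_{\Hu} \leq 1$ yields $\norm{\Su - A_{\bsu,\delta}^\ast} \leq \delta$, which is the claim.

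As for the main obstacle, there really is none of substance: this is the textbook error estimate for a truncated SVD, and every ingredient (the two orthonormal systems and their Parseval identities) is already in place from \eqref{eq:svd_Su}--\eqref{eq:etaBar}. The only point demanding a little care is bookkeeping of the two orthogonality relations so that they are applied in the right spaces: the identity used to evaluate the image norm is the $\Ltwou$-orthonormality of the $\bar{\eta}_{\bsu,\bsj}$, whereas the identity used to bound the coefficient sum is the $\Hu$-orthonormality of the $\eta_{\bsu,\bsj}$, and both sums must be indexed consistently over $\bsj \in \N^{\abs{\bsu}}$.
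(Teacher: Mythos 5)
Your proposal is correct and takes essentially the same route as the paper's own proof: both truncate the SVD \eqref{eq:svd_Su}, use the $\Ltwou$-orthonormality of the $\bar{\eta}_{\bsu,\bsj}$ to evaluate the norm of the tail, bound the remaining singular values by $\delta$, and conclude with Bessel's inequality in $\Hu$ before taking the supremum over the unit ball.
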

\begin{proof}
    Let $\delta \in (0,1)$. Then using \eqref{eq:svd_Su} and Bessel's inequality, we get
    \begin{equation*}
        \begin{aligned}
            &\norm{\Su(f) - A_{\bsu,\delta}^\ast(f)}^2_{\Ltwou} = \norm{ \sum_{\bsj \notin J(\delta,\bsu)} \sqrt{\lambda_{\bsu,\bsj}} \innProd{f}{\eta_{\bsu,\bsj}}{\Hu} \cdot \bar{\eta}_{\bsu,\bsj} }^2_{\Ltwou}\\
            &= \sum_{\bsj \notin J(\delta,\bsu)} \lambda_{\bsu,\bsj} \abs{\innProd{f}{\eta_{\bsu,\bsj}}{\Hu}}^2 \norm{\bar{\eta}_{\bsu,\bsj}}^2_{\Ltwou} \leq \delta^2 \sum_{\bsj \notin J(\delta,\bsu)} \abs{\innProd{f}{\eta_{\bsu,\bsj}}{\Hu}}^2 \leq \delta^2 \norm{f}^2_{\Hu}.
        \end{aligned}
    \end{equation*}
    The result now follows by taking supremum over $f \in \Hu$ with $\norm{f}_{\Hu} = 1$.
\end{proof}

The next result is reformulated from \cite[Theorem~1(i)]{WW11a}, and has a straightforward proof.

\begin{corollary}\label{th:opt_err}
    In the ANOVA setting, the algorithm $\AOpt$ as in \eqref{eq:opt_algo} achieves a worst case error at most $\varepsilon$.
\end{corollary}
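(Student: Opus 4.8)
The plan is to assemble the two preceding lemmas together with the explicit norm formula for $\HgA$. First I would verify that $\AOpt$ meets the hypothesis of Lemma~\ref{le:err_decomp_A}, namely that $\AOpt(\Hu)\subseteq\Sol(\Hu)=\Hu$ for every $\bsu\in\UUg$. This is immediate from the definition~\eqref{eq:opt_algo}: restricted to $\Hu$, the algorithm reduces to $A_{\bsu,\varepsilon/\sqrt{\gmu}}^\ast$, whose output is a finite linear combination of the functions $\bar{\eta}_{\bsu,\bsj}=\Su(\eta_{\bsu,\bsj})/\sqrt{\lambda_{\bsu,\bsj}}$, each of which lies in $\Su(\Hu)=\Hu$.

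With this hypothesis in hand, Lemma~\ref{le:err_decomp_A} yields, for any $f\in\HgA$,
\[
   \norm{\Sol(f)-\AOpt(f)}_{\Ltwo}^2 = \sum_{\bsu\in\UUg}\norm{\Su(\fu)-A_{\bsu,\varepsilon/\sqrt{\gmu}}^\ast(\fu)}_{\Ltwou}^2 .
\]
Next I would bound each summand by invoking Lemma~\ref{le:opt_err_u} with $\delta=\varepsilon/\sqrt{\gmu}$, which gives $\norm{\Su-A_{\bsu,\varepsilon/\sqrt{\gmu}}^\ast}\leq\varepsilon/\sqrt{\gmu}$ and hence
\[
   \norm{\Su(\fu)-A_{\bsu,\varepsilon/\sqrt{\gmu}}^\ast(\fu)}_{\Ltwou}^2 \leq \frac{\varepsilon^2}{\gmu}\,\norm{\fu}_{\Hu}^2 .
\]

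Summing over $\bsu\in\UUg$ and recalling the norm identity $\norm{f}_{\HgA}^2=\sum_{\bsu\in\UUg}\gmu^{-1}\norm{\fu}_{\Hu}^2$ would then give
\[
   \norm{\Sol(f)-\AOpt(f)}_{\Ltwo}^2 \leq \varepsilon^2\sum_{\bsu\in\UUg}\gmu^{-1}\norm{\fu}_{\Hu}^2 = \varepsilon^2\,\norm{f}_{\HgA}^2 ,
\]
and taking the supremum over the unit ball $\norm{f}_{\HgA}\leq 1$ yields $e(\AOpt,\HgA)\leq\varepsilon$. There is no genuine analytic obstacle here: the whole point is that the tolerance $\delta=\varepsilon/\sqrt{\gmu}$ chosen in the definition of $A_{\bsu,\varepsilon/\sqrt{\gmu}}^\ast$ is precisely tuned to the weight $\gmu^{-1}$ appearing in the $\HgA$-norm, so that the weights cancel cleanly and the threshold $\varepsilon^2$ factors out of the sum. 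The one step requiring a moment's care is confirming the applicability of Lemma~\ref{le:err_decomp_A}, i.e.\ that the componentwise-defined $\AOpt$ indeed preserves each subspace $\Hu$, which I would dispose of at the very outset as above.
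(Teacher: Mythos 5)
Your proof is correct and is precisely the ``straightforward proof'' the paper alludes to (it omits the details, citing \cite[Theorem~1(i)]{WW11a}): you combine Lemma~\ref{le:err_decomp_A}, Lemma~\ref{le:opt_err_u} with $\delta=\varepsilon/\sqrt{\gmu}$, and the norm identity for $\HgA$, exactly as intended. The verification that $\AOpt$ preserves each $\Hu$ (and that each $J(\varepsilon/\sqrt{\gmu},\bsu)$ is finite) is a worthwhile detail that the paper leaves implicit.
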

The following result was mentioned in \cite{WW11a}\label{lemma:decompW} without a proof, therefore we present it here for the reader's convenience.
\begin{lemma}
	The operator $\W=\Sol^{\ast}\Sol$ admits the following decomposition over ANOVA spaces
	\begin{equation}
		\W(f) = \sum_{\bsu \in \UUg} \gamma_{\bsu} \Wu(\fu) \quad \forall f \in \HgA,\ f = \sum_{\bsu \in \UUg} \fu .
	\end{equation}
\end{lemma}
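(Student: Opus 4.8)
The plan is to identify $\W(f)$ component-by-component with respect to the orthogonal decomposition $\HgA = \bigoplus_{\bsu \in \UUg} H(\gmu \ku)$ from \eqref{eq:Hg_decomp}, which sidesteps any separate convergence argument: once I know the orthogonal projection $\Pu \W(f)$ onto each summand, the series $\W(f) = \sum_{\bsu \in \UUg} \Pu \W(f)$ converges in $\HgA$ automatically. So it suffices to show $\Pu \W(f) = \gmu \Wu(\fu)$ for every $\bsu \in \UUg$, where the right-hand side is read as an element of $H(\gmu \ku)$ (equal to $\Hu$ as a vector space, with norm rescaled by $\gmu^{-1/2}$).

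To pin down the $\bsu$-component, I would test against an arbitrary $g_\bsu \in \Hu$, regarded as sitting inside the $\bsu$-summand of $\HgA$. Using $\W = \Sol^\ast \Sol$ and the adjoint identity gives $\innProd{\W(f)}{g_\bsu}{\HgA} = \innProd{\Sol(f)}{\Sol(g_\bsu)}{\Ltwo}$. Next I would expand $\Sol(f) = \sum_{\bsv \in \UUg} \fv$ (convergent in $\Ltwo$ by continuity of $\Sol$ together with \eqref{eq:fn_decomp}) and use $\Sol(g_\bsu) = \Su(g_\bsu)$ to write the pairing as $\sum_{\bsv \in \UUg} \innProd{\fv}{g_\bsu}{\Ltwo}$.

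The key simplification is the ANOVA orthogonality \eqref{eq:anova_finite}: since $\fv$ depends only on the coordinates in $\bsv$ and $g_\bsu$ only on those in $\bsu$, each $\Ltwo$-inner product reduces to the finite-dimensional one $\innProd{\fv}{g_\bsu}{L^2(D^{\bsu \cup \bsv}, \rho^{\bsu \cup \bsv})}$, which vanishes whenever $\bsv \neq \bsu$. Only the diagonal term $\bsv = \bsu$ survives, and there the infinite-variate inner product collapses to $\innProd{\fu}{g_\bsu}{\Ltwou}$ because $\rho^\N$ restricts to $\rhou$ on the active coordinates and integrates to one over the rest. Rewriting $\innProd{\Su(\fu)}{\Su(g_\bsu)}{\Ltwou} = \innProd{\Wu(\fu)}{g_\bsu}{\Hu} = \gmu^{-1}\innProd{\gmu \Wu(\fu)}{g_\bsu}{\Hu}$ and comparing with the $\HgA$-inner product on the $\bsu$-summand (where $\innProd{\cdot}{\cdot}{H(\gmu \ku)} = \gmu^{-1}\innProd{\cdot}{\cdot}{\Hu}$) yields $\Pu\W(f) = \gmu \Wu(\fu)$, as desired.

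The main obstacle is bookkeeping rather than depth: I must justify interchanging the sum over $\bsv$ with the inner product (continuity of $\Sol$ plus convergence of the decomposition settles this) and carefully reduce each infinite-variate $\Ltwo$-pairing to its finite-variate counterpart so that \eqref{eq:anova_finite} can be applied. The step where the off-diagonal terms drop out is exactly where the ANOVA hypothesis is indispensable and would fail for a general non-ANOVA kernel, so I would emphasize that this is the only place the ANOVA structure enters.
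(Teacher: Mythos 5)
Your argument is correct, and it reaches the conclusion by a mildly different route than the paper. The paper's proof is a pointwise computation: it tests $\W(f)$ against the kernel sections $\Kg(\cdot,\bsy)$ via the reproducing property, writes $\innProd{\W(f)}{\Kg(\cdot,\bsy)}{\HgA}=\innProd{\Sol(f)}{\Sol(\Kg(\cdot,\bsy))}{\Ltwo}$ as an integral, expands $\Kg=\sum_{\bsu}\gmu\ku$ inside the integral, and invokes the dominated convergence theorem together with the ANOVA condition to reduce each term to $\gmu(\Wu\fu)(\bsy_\bsu)$; this yields the formula for $(\W f)(\bsy)$ directly as a pointwise identity. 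You instead work componentwise in the orthogonal decomposition $\HgA=\bigoplus_{\bsu\in\UUg}H(\gmu\ku)$, testing against arbitrary $g_\bsu$ in each summand and using the same adjoint identity plus the ANOVA orthogonality \textup{(\ref{eq:anova_finite})} to identify $\Pu\W(f)=\gmu\Wu(\fu)$. The two proofs share the same engine (the adjoint identity and the vanishing of off-diagonal $L^2$-pairings, which is where the ANOVA hypothesis enters in both); what differs is the family of test elements and the convergence tool. Your version trades the dominated convergence argument for continuity of $\Sol$ and of the inner product, and gets norm convergence of the series $\sum_{\bsu}\gmu\Wu(\fu)$ in $\HgA$ for free from the orthogonal decomposition, whereas the paper's version delivers the pointwise evaluation formula immediately, with norm convergence left implicit. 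Do make sure to state explicitly, as you sketch, that the rescaled inner product $\innProd{\cdot}{\cdot}{H(\gmu\ku)}=\gmu^{-1}\innProd{\cdot}{\cdot}{\Hu}$ is what cancels the factor $\gmu^{-1}$ and produces the weight $\gmu$ in front of $\Wu(\fu)$; that bookkeeping step is the only place where an error could slip in.
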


\begin{proof}
    The Definition \ref{def:ANOVA_kern} and the dominated convergence theorem imply that
    \begin{equation*}
        \begin{aligned}
            (\W f)(\bsy) &= \innProd{\W(f)}{\Kg(\cdot,\bsy)}{\HgA} = \innProd{\Sol(f)}{\Sol(\Kg(\cdot,\bsy))}{\Ltwo}\\
            &= \int_{\X} \Kg(\bsx,\bsy) f(\bsx) \drho^{\N}(\bsx) = \int_{\X} \sum_{\bsu \in \UUg} \gmu \ku(\bsx_\bsu,\bsy_\bsu) f(\bsx) \drho^{\N}(\bsx) \\
            &= \sum_{\bsu \in \UUg} \gmu \int_{\Du} \ku(\bsx_\bsu,\bsy_\bsu) \fu(\bsx_\bsu) \drho^{\bsu}(\bsx_\bsu) = \sum_{\bsu \in \UUg} \gmu (\Wu \fu) (\bsy_\bsu).
        \end{aligned}
    \end{equation*}
    The result now follows.
\end{proof}

Using the eigenpairs from \eqref{eq:ortho_Hu} and Lemma \ref{lemma:decompW}, we can conclude that the eigenpairs of $\W$ with positive eigenvalues are
\begin{equation}\label{W_eig}
	\left\{ \left( \gmu \lambda_{\bsu,\bsj}, \xi_{\bsu,\bsj} \right) : \bsu \in \UUg, \bsj \in \N^{\bsu} \right\} ,
\end{equation}
with eigenfunctions $\xi_{\bsu,\bsj} := \sqrt{\gmu} \eta_{\bsu,\bsj}$ normalized in $\HgA$.
We can now express the solution operator $\Sol$ using its singular values and the orthonormal system  as
\begin{equation*}
    \begin{aligned}
        \Sol(f) = \sum_{\bsu \in \UUg} \sum_{\bsj \in \N^{\bsu}} \innProd{f}{\eta_{\bsu,\bsj}}{\Hu}\cdot \Sol(\eta_{\bsu,\bsj}) = \sum_{\bsu \in \UUg} \sum_{\bsj \in \N^{\bsu}} \sqrt{\gmu \lambda_{\bsu,\bsj}}\ \innProd{f}{\xi_{\bsu,\bsj}}{\Hg}\cdot \bar{\eta}_{\bsu,\bsj},
    \end{aligned}
\end{equation*}
where $\bar{\eta}_{\bsu,\bsj}$ is as defined in \eqref{eq:etaBar}.

The next result is the analogue of \cite[Theorem~1(iii)]{WW11a} where it was proven for unrestricted subspace sampling. It can be checked that the proof follows verbatim and the arguments hold irrespective of the cost model.

\begin{theorem}\label{th:opt_cost}
    In the nested subspace sampling model, the cost of the algorithm $\AOpt$ is minimal amongst all algorithms that achieve a worst case error of at most $\varepsilon$.
\end{theorem}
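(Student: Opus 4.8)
The plan is to prove the matching lower bound $\cost(\A) \ge \cost(\AOpt)$ for every algorithm $\A$ of the form \eqref{eq:lin_algo_all} with $e(\A,\HgA) \le \varepsilon$; since $\AOpt$ itself meets the threshold by Corollary~\ref{th:opt_err}, this gives optimality. First I would record that, by \eqref{eq:opt_algo} and the NSS cost model, each summand of $\AOpt$ uses one functional whose representer lies in $\Hu$, so
\begin{equation*}
    \cost(\AOpt) = \sum_{(\bsu,\bsj) \in \Mgeps} \$(\max \bsu).
\end{equation*}
Writing $\$(\max \bsu) = \sum_{m \ge 0} [\$(m) - \$(m-1)]\,\mathbf{1}[\max \bsu \ge m]$ with the convention $\$(-1) := 0$, and doing the same for a general $\A = \sum_{j=1}^n L_{\bsu_j}(\cdot)\,g_j$, the comparison reduces to a level-wise count: it suffices to show that for every integer $m \ge 0$,
\begin{equation}\label{eq:levelwise}
    \#\{ j : \max \bsu_j \ge m \} \ \ge\ K_m := \#\{ (\bsu,\bsj) \in \Mgeps : \max \bsu \ge m \}.
\end{equation}
Indeed, $\$(m) - \$(m-1) \ge 0$ for all $m \ge 0$ (as $\$$ is non-decreasing with $\$(0) \ge 1$), both sums are finite since $\Mgeps$ is finite, and weighting \eqref{eq:levelwise} by these nonnegative increments yields $\cost(\A) \ge \cost(\AOpt)$.

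For \eqref{eq:levelwise} I would use that the worst-case error of any algorithm using the functionals $L_{\bsu_1},\dots,L_{\bsu_n}$ is bounded below by the radius of this information, namely by $\sup\{ \norm{\Sol f}_{\Ltwo} : f \in V^\perp,\ \norm{f}_{\HgA} \le 1 \}$, where $V := \mspan\{\eta_{\bsu_1},\dots,\eta_{\bsu_n}\} \subseteq \HgA$ and $V^\perp$ is the common kernel $\{f : L_{\bsu_j}(f)=0\ \forall j\}$ by \eqref{eq:info_fnl} (see \cite{TWW88,NW08}). Fix $m$ and split $V = V_{<m} + V_{\ge m}$ according to whether $\max \bsu_j \le m-1$ or $\max \bsu_j \ge m$. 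By \eqref{eq:info_fnl}, a representer with $\max \bsu_j \le m-1$ lies in $\bigoplus_{\bsv : \max \bsv \le m-1} \Hv$, whereas each eigenfunction $\xi_{\bsu,\bsj}$ with $\max \bsu \ge m$ lies in $\Hu$ with $\bsu$ distinct from every such $\bsv$; the orthogonal decomposition \eqref{eq:Hg_decomp} then forces $\xi_{\bsu,\bsj} \perp V_{<m}$. Let $E := \mspan\{ \xi_{\bsu,\bsj} : (\bsu,\bsj) \in \Mgeps,\ \max \bsu \ge m\}$, which is a $K_m$-dimensional orthonormal system orthogonal to $V_{<m}$.

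Since every $(\bsu,\bsj) \in \Mgeps$ satisfies $\gmu \lambda_{\bsu,\bsj} > \varepsilon^2$, any unit $\xi \in E$ obeys $\norm{\Sol \xi}_{\Ltwo}^2 = \innProd{\W \xi}{\xi}{\HgA} > \varepsilon^2$; if $E \cap V^\perp \neq \{0\}$, this would push the radius of information above $\varepsilon$, contradicting $e(\A,\HgA) \le \varepsilon$, so $E \cap V^\perp = \{0\}$. If \eqref{eq:levelwise} failed, then $\dim V_{\ge m} \le \#\{j : \max \bsu_j \ge m\} < K_m = \dim E$, so the orthogonal projection of $E$ onto $V_{\ge m}$ would have a nontrivial kernel, producing a nonzero $\xi \in E$ with $\xi \perp V_{\ge m}$; together with $\xi \perp V_{<m}$ this gives $\xi \perp V$, i.e. $\xi \in E \cap V^\perp$, a contradiction. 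This establishes \eqref{eq:levelwise} and hence the theorem.

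I expect the main obstacle to be \eqref{eq:levelwise} itself, specifically the observation that cheap functionals (small $\max \bsu_j$) have representers that are orthogonal, via \eqref{eq:Hg_decomp}, to the expensive high-index eigendirections, which is what prevents an algorithm from trading expensive measurements for cheap ones. The remaining ingredients, the radius-of-information lower bound and the Abel summation over cost levels, are routine. Notably the same argument applies verbatim to the unrestricted subspace sampling cost $\$(\abs{\bsu})$, replacing $\max \bsu$ by $\abs{\bsu}$ throughout, which explains why the reasoning is insensitive to the choice of cost model.
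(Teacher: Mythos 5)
Your proof is correct and is essentially the argument the paper relies on: the paper gives no details, merely asserting that the proof of \cite[Theorem~1(iii)]{WW11a} carries over verbatim, and your writeup (Abel summation over cost levels, followed by the radius-of-information plus eigenspace dimension-counting argument showing that any $\varepsilon$-accurate algorithm must use at least $K_m$ functionals of cost level $\geq m$ for every $m$) is precisely that argument instantiated for the NSS cost, with $\max \bsu$ in place of $\abs{\bsu}$. Your closing remark that the reasoning is insensitive to the cost model matches the paper's own justification for invoking the unrestricted-sampling result here.
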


Due to \RefThm{th:opt_cost}, $\AOpt$ from \eqref{eq:opt_algo} is the optimal algorithm for the approximation problem in ANOVA function spaces. Therefore, it suffices to compute the cost of $\AOpt$ to determine the $\varepsilon$-complexity. Let
\begin{equation}\label{eq:opt_set_k}
    \begin{aligned}
        M(\varepsilon,k) &:= \left\{(\bsu,\bsj): k \in \bsu \subseteq [k], \bsj \in \N^{\bsu}, \gamma_{\bsu} \lambda_{\bsu,\bsj}>\varepsilon^{2}\right\}, \quad \text{and}\\
        \quad n_k &:= \abs{M(\varepsilon,k)}.
    \end{aligned}
\end{equation}
In order to simplify the cost analysis later on, we can rewrite the optimal algorithm from \eqref{eq:opt_algo} in a multilevel fashion as
\begin{equation}\label{eq:opt_algo_ml}
    \AOpt (f) = \sum_{k=1}^{\meps} \sum_{(\bsu,\bsj) \in M(\varepsilon,k)} \innProd{f}{\eta_{\bsu,\bsj}}{\Hu} \cdot \Sol (\eta_{\bsu,\bsj}),
\end{equation}
where $\meps \in \N$ is the largest number $k$ such that $n_k > 0$.

\begin{lemma}\label{lem:m_eps_UL}
    For an error threshold $\varepsilon > 0$, the optimal algorithm $\AOpt$ samples from $\meps$ levels which can be estimated as
        $$ \meps \gtrsim \varepsilon^{-2/p_1} \quad \forall p_1 \in (\decgup,\infty), $$
        and
        $$ \meps \lesssim \varepsilon^{-2/p_2} \quad \forall p_2 \in (0,\decglow). $$
\end{lemma}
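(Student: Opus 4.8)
The plan is to show that, up to multiplicative constants independent of $k$ and $\varepsilon$, the level $k$ is active (that is, $n_k>0$) exactly when $\gamma_k\gtrsim\varepsilon^2$, and then to read the two bounds on $\meps$ off the polynomial decay of the weight sequence. The cheap direction is the lower one: the singleton $\bsu=\{k\}$ with multi-index $\bsj=\bsone$ gives $\gmu\luj=\gamma_k\lambda_1$, so already $\gamma_k\lambda_1>\varepsilon^2$ forces $(\{k\},\bsone)\in M(\varepsilon,k)$ and hence $n_k>0$. For the converse I would bound, for every admissible pair with $k\in\bsu\subseteq[k]$ and every $\bsj$,
\[
\gmu\luj\;\le\;\gmu\lambda_1^{\abs{\bsu}}\;=\;\prod_{i\in\bsu}(\gamma_i\lambda_1)\;\le\;\Big(\prod_{i:\,\gamma_i\lambda_1>1}\gamma_i\lambda_1\Big)\gamma_k\lambda_1\;=:\;C_0\,\gamma_k\lambda_1,
\]
using $\luj\le\lambda_1^{\abs{\bsu}}$, the fact that the factor $i=k$ is always present, and that dropping all factors $\gamma_i\lambda_1\le1$ only increases the product. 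Since $\gamma_i\lambda_1=\gamma_i\norm{S}^2\to0$ by \eqref{eq:assump2} (applied to singletons, recalling $\norm{S}^2=\lambda_1$), only finitely many factors exceed $1$, so $C_0<\infty$; thus $n_k>0$ implies $\gamma_k>\varepsilon^2/(C_0\lambda_1)$.

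Next I would turn the two decay rates into polynomial envelopes for $\bsgamma$. For $p_1\in(\decgup,\infty)$, definition \eqref{def:decay_up} gives $\inf_k\gamma_k k^{p_1}=:c_1>0$, hence $\gamma_k\ge c_1 k^{-p_1}$ for all $k$. For $p_2\in(0,\decglow)$, definition \eqref{def:decay} gives $\sum_k\gamma_k^{1/p_2}<\infty$, and since $\bsgamma$ is non-increasing this forces $k\,\gamma_k^{1/p_2}\to0$, i.e. $\gamma_k\lesssim k^{-p_2}$.

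It remains to combine the pieces. For the lower bound, take $k_0$ to be the largest integer smaller than $(c_1\lambda_1\varepsilon^{-2})^{1/p_1}$; then $\gamma_{k_0}\lambda_1\ge c_1\lambda_1 k_0^{-p_1}>\varepsilon^2$, so $n_{k_0}>0$ and $\meps\ge k_0\gtrsim\varepsilon^{-2/p_1}$. For the upper bound, $n_{\meps}>0$ yields $\gamma_{\meps}\gtrsim\varepsilon^2$ from the first step, while the envelope gives $\gamma_{\meps}\lesssim\meps^{-p_2}$; chaining these gives $\meps^{-p_2}\gtrsim\varepsilon^2$, i.e. $\meps\lesssim\varepsilon^{-2/p_2}$. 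The only genuine obstacle is the uniform inequality of the first paragraph: one must rule out that enlarging $\bsu$ beyond $\{k\}$ helps by more than a bounded factor, which can happen a priori when $\lambda_1>1$; this is exactly controlled by the finiteness of $C_0$, itself a consequence of \eqref{eq:assump2}. Everything else reduces to elementary manipulations of the decay-rate definitions.
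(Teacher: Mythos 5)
Your proof is correct and follows essentially the same route as the paper: convert the decay rates into polynomial envelopes for $\bsgamma$ (the paper's Lemma~\ref{le:decay_bounds}) and then characterize the active levels by comparing $\gamma_k\lambda_1$ with $\varepsilon^2$ -- the paper argues via $(\{\meps+1\},1)\notin\Mgeps$ and $(\{\meps\},1)\in\Mgeps$ rather than exhibiting an explicit $k_0$, but this is the same computation. One point in your favour: the paper's assertion that $n_{\meps}>0$ forces $(\{\meps\},1)\in\Mgeps$ tacitly assumes the extra factors $\gamma_i\lambda_1$ for $i\in\bsu\setminus\{\meps\}$ can be discarded, which is not automatic since $\lambda_1=\norm{S}^2\geq1$; your finite constant $C_0$, justified via \eqref{eq:assump2}, closes exactly this gap and yields the same conclusion up to a harmless constant.
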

\begin{proof}
    We have for all $p_1 \in (\decgup, \infty)$ and $p_2 \in (0,\decglow)$
    \begin{equation}\label{eq:gamma_bnds}
        \gamma_j \gtrsim j^{-p_1} \quad \text{and} \quad \gamma_j \lesssim j^{-p_2},
    \end{equation}
    see \RefLem{le:decay_bounds}. We have $(\{\meps+1\},1) \notin \Mgeps$ which means $\gamma_{\meps+1}\lambda_{1} \leq \varepsilon^2$. Using the lower bound from \eqref{eq:gamma_bnds}, we get for some $c_1 > 0$
    $$ c_1 \lambda_1 (\meps+1)^{-p_1} \leq \lambda_1 \gamma_{\meps+1} \leq \varepsilon^2 \implies (c_1 \lambda_1)^{1/p_1} \varepsilon^{-2/p_1} \leq \meps + 1. $$
    Therefore, we get
    \begin{equation}\label{eq:m_ceil}
        \meps \geq (c_1 \lambda_1)^{1/p_1} \varepsilon^{-2/p_1} - 1,
    \end{equation}
    and the lower bound from the lemma follows.
    
    To show the remaining upper bound from the lemma, we note that $\bigl(\{\meps\},1\bigr) \in \Mgeps$ must hold, which is equivalent to the condition $\lambda_1 \gamma_{\meps} > \varepsilon^2$. Then using \eqref{eq:gamma_bnds} we get for some constant $c_2 > 0$
    \begin{equation}\label{eq:m_floor}
        \varepsilon^2 < \lambda_1 \gamma_{\meps}  \leq c_2 \lambda_1 \meps^{-p_2} \implies \meps \leq (c_2 \lambda_1)^{1/p_2} \varepsilon^{-2/p_2},
    \end{equation}
    giving us the desired upper bound.
\end{proof}

We now want to estimate the cardinality of $M(\varepsilon,k)$ as defined in \eqref{eq:opt_set_k}. Before we proceed, we introduce the following notation. Let $(\bsu_1,\bsj_1), (\bsu_2,\bsj_2) \in \Mgeps$ such that $\bsu_1 \cap \bsu_2 = \emptyset$, $\abs{\bsu_1} = k_1, \abs{\bsu_2} = k_2$ and $\bsj_1 \in \N^{k_1}, \bsj_2 \in \N^{k_2}$. Then, we define $(\bsu_1, \bsj_2) \times (\bsu_2, \bsj_2) = (\bsu, \bsj)$ with $\bsu = \bsu_1 \cup \bsu_2, \bsj \in \N^{\bsu_1 \cup \bsu_2}$ such that $j_i = j_{1,i}$ if $i \in \bsu_1$, and $j_i = j_{2,i}$ if $i \in \bsu_2.$
Moreover, we define $\Mbar(\varepsilon,k) := \left\{ (k,j) : j \in \N, \gamma_k\lambda_j > \varepsilon^2 \right\}$, and $\overline{n}_k := \abs{\Mbar(\varepsilon,k)}$.

It is straightforward to show that for every $\tau \in (0,\decglow)$ and every constant $C_\tau > 0$
\begin{equation}\label{eq:prodWt}
    \sum_{k \in \bsu \subseteq [k]} \gmu^{1/\tau} C_\tau^{\abs{\bsu}} \asymp \gamma_k^{1/\tau};
\end{equation}
confer \cite[Theorem~5 and Corollary~1]{DG14a} where the statement was proved for more general weights.

\begin{lemma}\label{lem:M_eps_k}
    Let $\varepsilon \in (0,1)$ and $k \in [\meps]$. Then for all $q_1 \in (\declup,\infty)$ and $q_2 \in (1,\decllow)$ we have the following properties.
    \begin{enumerate}
        \item $M(\varepsilon,k+1) = \Mbar(\varepsilon,k+1) \cup  \bigcup_{\ell=1}^k \bigcup_{j \in \N} \left\{ (\{k+1\}, j) \times M\left(\frac{\varepsilon}{\sqrt{\gamma_{k+1} \lambda_j}},\ell \right) \right\}$. \label{item1}
        \item $\abs{M(\varepsilon,k+1)} = \abs{\Mbar(\varepsilon,k+1)} + \sum_{\ell = 1}^{k} \sum_{j \in \N} \abs{M\left(\frac{\varepsilon}{\sqrt{\gamma_{k+1} \lambda_j}},\ell\right)}$. \label{item2}
        \item $\abs{\Mbar(\varepsilon,k)} \gtrsim \gamma_k^{1/q_1} \varepsilon^{-2/q_1}$.\label{item3}
        \item $\abs{\Mbar(\varepsilon,k)} \lesssim \gamma_k^{1/q_2} \varepsilon^{-2/q_2}$. \label{item4}
        \item $\abs{M(\varepsilon,k)} \lesssim \varepsilon^{-2/q_2} \sum_{k \in \bsu \subseteq [k]} \gmu^{1/q_2} c(\bslambda,q_2)^{\abs{u}-1}$, \\ where $c(\bslambda,q_2) = \sum_{j \in \N} \lambda_j^{1/q_2} < \infty$. \label{item5}
        \item $\abs{M(\varepsilon,k)} \lesssim \varepsilon^{-2/q_2} \gamma_k^{1/q_2}$, if additionally $q_2 \in (1,\min\{ \decllow,\decglow \})$. \label{item6}
    \end{enumerate}
\end{lemma}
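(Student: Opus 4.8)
The plan is to treat the six items in order, since the later ones build on the earlier. For (\ref{item1}) I split an arbitrary $(\bsu,\bsj)\in M(\varepsilon,k+1)$ (so $k+1=\max\bsu$) according to whether $\bsu=\{k+1\}$ or $\bsu\supsetneq\{k+1\}$. In the first case the defining inequality $\gmu\lambda_{\bsu,\bsj}>\varepsilon^2$ reads $\gamma_{k+1}\lambda_j>\varepsilon^2$, i.e.\ $(\{k+1\},j)\in\Mbar(\varepsilon,k+1)$. In the second, write $\bsu=\{k+1\}\cup\bsu'$ with $\emptyset\neq\bsu'\subseteq[k]$, set $\ell:=\max\bsu'$, and let $j$ be the coordinate of $\bsj$ at position $k+1$; the product form $\gmu=\gamma_{k+1}\gamma_{\bsu'}$ and $\lambda_{\bsu,\bsj}=\lambda_j\lambda_{\bsu',\bsj'}$ turns the defining inequality into $\gamma_{\bsu'}\lambda_{\bsu',\bsj'}>\varepsilon^2/(\gamma_{k+1}\lambda_j)$, which is exactly $(\bsu',\bsj')\in M(\varepsilon/\sqrt{\gamma_{k+1}\lambda_j},\ell)$. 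This yields the claimed identity. For (\ref{item2}) I then check that the union is disjoint: the $\Mbar$-piece is the unique part with $|\bsu|=1$, distinct $\ell$ give distinct values of $\max(\bsu\setminus\{k+1\})$, and for fixed $\ell$ distinct $j$ fix distinct coordinates at position $k+1$; passing to cardinalities gives the additive formula.

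Items (\ref{item3}) and (\ref{item4}) both estimate $\overline{n}_k=|\{j\in\N:\gamma_k\lambda_j>\varepsilon^2\}|$, which is an initial segment of $\N$ since $\bslambda$ is non-increasing. For (\ref{item3}), $q_1>\declup$ yields by \eqref{def:decay_up} a constant $c_1>0$ with $\lambda_j\geq c_1 j^{-q_1}$, so every positive integer $j<(c_1\gamma_k/\varepsilon^2)^{1/q_1}$ lies in $\Mbar(\varepsilon,k)$; hence $\overline{n}_k\geq(c_1\gamma_k/\varepsilon^2)^{1/q_1}-1\gtrsim\gamma_k^{1/q_1}\varepsilon^{-2/q_1}$, the subtracted constant not affecting the order. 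For (\ref{item4}), $q_2<\decllow$ makes $c(\bslambda,q_2)=\sum_{j}\lambda_j^{1/q_2}$ finite by \eqref{def:decay}, and a Markov-type estimate $\overline{n}_k\,(\varepsilon^2/\gamma_k)^{1/q_2}\leq\sum_{j\in\Mbar(\varepsilon,k)}\lambda_j^{1/q_2}\leq c(\bslambda,q_2)$ gives $\overline{n}_k\lesssim\gamma_k^{1/q_2}\varepsilon^{-2/q_2}$.

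For (\ref{item5}) I induct on $k$ via the recursion (\ref{item2}), with base case $M(\varepsilon,1)=\Mbar(\varepsilon,1)$ handled by (\ref{item4}). Assuming $|M(\varepsilon,\ell)|\leq c(\bslambda,q_2)\,\varepsilon^{-2/q_2}\sum_{\ell\in\bsv\subseteq[\ell]}\gmv^{1/q_2}c(\bslambda,q_2)^{|\bsv|-1}$ for all $\ell<k$, I substitute $\varepsilon\mapsto\varepsilon/\sqrt{\gamma_k\lambda_j}$ into the recursion: the factor $(\gamma_k\lambda_j)^{1/q_2}$ comes out, summing over $j$ contributes one factor $\sum_j\lambda_j^{1/q_2}=c(\bslambda,q_2)$, and $\sum_{\ell=1}^{k-1}\sum_{\ell\in\bsv\subseteq[\ell]}$ collapses to $\sum_{\emptyset\neq\bsv\subseteq[k-1]}$ because each nonempty $\bsv\subseteq[k-1]$ has a unique maximum. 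Re-indexing $\bsu=\{k\}\cup\bsv$ (so $\gmu=\gamma_k\gmv$ and $|\bsu|=|\bsv|+1$) rewrites this as $c(\bslambda,q_2)\,\gamma_k^{1/q_2}\varepsilon^{-2/q_2}\sum_{\emptyset\neq\bsv\subseteq[k-1]}\gmv^{1/q_2}c(\bslambda,q_2)^{|\bsv|}$, and the $\overline{n}_k$-term from (\ref{item4}) supplies precisely the missing $\bsv=\emptyset$ contribution, so the induction closes with the same constant $c(\bslambda,q_2)$.

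Finally, (\ref{item6}) follows from (\ref{item5}) by invoking \eqref{eq:prodWt} with $\tau=q_2$ (legitimate since the extra hypothesis gives $q_2<\decglow$) and $C_\tau=c(\bslambda,q_2)$: up to the constant factor $c(\bslambda,q_2)^{-1}$ that turns the exponent $|\bsu|$ into $|\bsu|-1$, the weight sum $\sum_{k\in\bsu\subseteq[k]}\gmu^{1/q_2}c(\bslambda,q_2)^{|\bsu|-1}$ is $\asymp\gamma_k^{1/q_2}$, whence $|M(\varepsilon,k)|\lesssim\gamma_k^{1/q_2}\varepsilon^{-2/q_2}$. I expect the combinatorics of (\ref{item1})--(\ref{item2}) and the induction in (\ref{item5}) to be the delicate part: one must track the rescaled threshold $\varepsilon/\sqrt{\gamma_k\lambda_j}$ correctly and verify that the boundary term $\bsv=\emptyset$ is exactly accounted for by the single-index count $\overline{n}_k$, so that no growing constant accumulates across the induction.
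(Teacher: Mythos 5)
Your proof is correct and follows essentially the same route as the paper: the same max-index decomposition and disjointness for (\ref{item1})--(\ref{item2}), the same induction closing with the constant $c(\bslambda,q_2)$ for (\ref{item5}), and the same appeal to \eqref{eq:prodWt} for (\ref{item6}). The only (harmless) deviation is in (\ref{item4}), where you bound $\overline{n}_k$ by a Markov-type counting argument using the summability of $\lambda_j^{1/q_2}$ instead of the paper's pointwise bound $\lambda_j \lesssim j^{-q_2}$; both yield the same estimate.
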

\begin{proof}
    We have for all $q_1 \in (\declup, \infty)$ and $q_2 \in (1,\decllow)$ that
    \begin{equation}\label{eq:lambda_bnds}
        \lambda_j \gtrsim j^{-q_1} \quad \text{and} \quad \lambda_j \lesssim j^{-q_2}.
    \end{equation}
    Note that the set $\Mgeps$ is non-empty since $\varepsilon^2 < 1$.
    \begin{enumerate}
        \item Let $( \bsu \cup \{ k+1 \}, ( \bsj, j_{k+1}  ) ) \in M(\varepsilon, k+1)$ where $\emptyset \neq \bsu \subseteq [k], \bsj \in \N^{\bsu}$ and $j_{k+1} \in \N$. Since we have product weights, the definition \eqref{eq:opt_set_k} requires that $\gamma_{\bsu} \lambda_{\bsj} > \varepsilon^2 / \gamma_{k+1} \lambda_{j_{k+1}}$.
        Let $\ell = \max(\bsu)$, then $\ell \in \bsu \subseteq [\ell]$ and
        $$ (\bsu,\bsj) \in M\left( \frac{\varepsilon}{\sqrt{\gamma_{k+1} \lambda_{j_{k+1}}}}, \ell \right). $$

        \item Note that the right-hand side in statement \ref{item1} is a union of disjoint sets.

        \item Using the lower bound from \eqref{eq:lambda_bnds}, and the definition of $\Mbar(\varepsilon,k)$, we get for some $c_1 > 0$ that
        $$ c_1 \gamma_k (\overline{n}_k+1)^{-q_1} \leq \lambda_{\overline{n}_k+1} \gamma_k \leq \varepsilon^2 \implies \overline{n}_k \geq c_1^{1/q_1}\gamma_k^{1/q_1} \varepsilon^{-2/q_1}-1. $$

        \item Using the upper bound from \eqref{eq:lambda_bnds}, and the definition of $\Mbar(\varepsilon,k)$, we get for some  $c_2 > 0$ that
        $$ \varepsilon^2 < \gamma_k \lambda_{\overline{n}_k} \leq c_2 \gamma_k \overline{n}_k^{-q_2} \implies \overline{n}_k \leq c_2^{1/q_2} \gamma_k^{1/q_2} \varepsilon^{-2/q_2}. $$

        \item We prove this result by induction. For $k=1$, we have due to statement \ref{item4}
        \begin{equation*}%\label{eq:M1}
            n_1 = \abs{M(\varepsilon,1)} = \abs{\Mbar(\varepsilon,1)} \lesssim \gamma_1^{1/q_2} \varepsilon^{-2/q_2}.
        \end{equation*}
        We suppose now that the bound is true for $M(\varepsilon,1),\dots,M(\varepsilon,k)$. Using statements \ref{item2} and \ref{item4}, we get for some $c_2 > 0$
        \begin{equation*}
            \begin{aligned}
                \abs{M(\varepsilon,k+1)} &= \abs{\Mbar(\varepsilon,k+1)} + \sum_{\ell = 1}^{k} \sum_{j \in \N} \abs{M\left(\frac{\varepsilon}{\sqrt{\gamma_{k+1} \lambda_j}},\ell\right)} \\
                &\leq c_2 \varepsilon^{-2/q_2} \gamma_{k+1}^{1/q_2} + c_2 \sum_{\ell=1}^k \sum_{j \in \N} \left( \frac{\varepsilon}{\sqrt{\gamma_{k+1}\lambda_j}} \right)^{-2/q_2} \sum_{\ell \in \bsu \subseteq [\ell]} \gmu^{1/q_2} c(\bslambda,q_2)^{\abs{u}-1} \\
                &= c_2 \varepsilon^{-2/q_2} \gamma_{k+1}^{1/q_2} \left[ 1 + \sum_{\ell=1}^k \sum_{\ell \in \bsu \subseteq [\ell]} \gmu^{1/q_2} c(\bslambda,q_2)^{\abs{u}-1} \sum_{j \in \N} \lambda_j^{1/q_2} \right] \\
                &= c_2 \varepsilon^{-2/q_2} \left[ \gamma_{k+1}^{1/q_2} + \gamma_{k+1}^{1/q_2} \sum_{\emptyset \neq \bsu \subseteq [k]} \gmu^{1/q_2} c(\bslambda,q_2)^{\abs{u}} \right] \\
                &= c_2 \varepsilon^{-2/q_2} \sum_{k+1 \in \bsu \subseteq [k+1]} \gmu^{1/q_2} c(\bslambda,q_2)^{\abs{u}-1}.
            \end{aligned}
        \end{equation*}
        \item The estimate follows directly by applying \eqref{eq:prodWt} to the formula in statement \ref{item5}.
    \end{enumerate}
    This completes the proof.
\end{proof}

\begin{theorem}\label{th:ANOVA_rate}
    Let $\decglow,\decllow > 1$ and $\$(k) \asymp k^s$ for some $s>0$. Then the convergence rate of the $n$-th minimal errors using the nested subspace sampling cost model in ANOVA function spaces with product weights is bounded as follows
    $$ \min\left\{ \frac{\decllow}{2}, \frac{\decglow}{2(1+s)} \right\} \leq \ConvAllInf^A \leq \min\left\{ \frac{\decllow}{2}, \frac{\decgup}{2(1+s)} \right\}. $$
\end{theorem}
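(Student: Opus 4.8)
The plan is to route everything through the $\varepsilon$-complexity, exploiting the identity $\ConvAllInf^A = 1/p_\infty$ together with the optimality of $\AOpt$ (\RefThm{th:opt_cost}), which gives $\comp(\varepsilon,\HgA) = \cost(\AOpt)$. Reading off the multilevel representation \eqref{eq:opt_algo_ml}, each index $(\bsu,\bsj)\in M(\varepsilon,k)$ contributes one functional with representer $\eta_{\bsu,\bsj}\in\Hu$, genuinely supported on $\bsu$ with $\max\bsu=k$, hence of NSS cost $\$(\max\bsu)=\$(k)\asymp k^s$. Therefore
$$ \comp(\varepsilon,\HgA)=\cost(\AOpt)\asymp\sum_{k=1}^{\meps} n_k\,k^s,\qquad n_k=\abs{M(\varepsilon,k)}, $$
and the whole theorem reduces to sandwiching this single sum.

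For the lower bound on $\ConvAllInf^A$ I would bound the sum from above. Substituting $n_k\lesssim\varepsilon^{-2/q_2}\gamma_k^{1/q_2}$ from statement~(\ref{item6}) of \RefLem{lem:M_eps_k} (valid for $q_2\in(1,\min\{\decllow,\decglow\})$) and then $\gamma_k\lesssim k^{-p_2}$ for $p_2\in(0,\decglow)$, I get $\comp\lesssim\varepsilon^{-2/q_2}\sum_{k=1}^{\meps}k^{s-p_2/q_2}$. This splits into two regimes: if $p_2/q_2>s+1$ the level-sum converges and $\comp\lesssim\varepsilon^{-2/q_2}$; if $p_2/q_2<s+1$ it is $\asymp\meps^{\,s+1-p_2/q_2}$, and feeding in $\meps\lesssim\varepsilon^{-2/p_2}$ from \RefLem{lem:m_eps_UL} (same $p_2$) the exponents telescope to $\comp\lesssim\varepsilon^{-2(s+1)/p_2}$. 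Pushing $q_2\uparrow\decllow$ in the first regime and $p_2\uparrow\decglow$ in the second, and checking that the constraint $q_2<\min\{\decllow,\decglow\}$ leaves enough room in each of the cases $\decglow\geq(1+s)\decllow$ and $\decglow<(1+s)\decllow$, yields $p_\infty\leq\max\{2/\decllow,\,2(1+s)/\decglow\}$, i.e. the asserted lower bound on $\ConvAllInf^A$.

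For the matching upper bound I would establish the two ceilings separately. First, $\ConvAllInf^A\leq\decllow/2$: since $\$\geq 1$, any algorithm of cost at most $n$ uses at most $n$ functionals, so $e(n,\HgA)$ is bounded below by the $(n+1)$-st singular value of $\Sol$; as the squared singular values are $\{\gamma_\bsu\lambda_{\bsu,\bsj}\}$ by \eqref{W_eig}, retaining only $\bsu=\{1\}$ gives $e(n,\HgA)\geq\sqrt{\gamma_1\lambda_{n+1}}$, whence $\ConvAllInf^A=\decay(\{e(n,\HgA)\})\leq\decay(\{\sqrt{\lambda_n}\})=\decllow/2$. Second, $\ConvAllInf^A\leq\decgup/(2(1+s))$: the optimal algorithm contains every singleton functional $(\{k\},j)$ with $\gamma_k\lambda_j>\varepsilon^2$, so writing $\overline m_\varepsilon$ for the largest level admitting such a functional (i.e. the largest $k$ with $\gamma_k\lambda_1>\varepsilon^2$), every level $k\leq\overline m_\varepsilon$ carries at least one functional of cost $\asymp k^s$ and
$$ \comp(\varepsilon,\HgA)\geq\sum_{k=1}^{\overline m_\varepsilon}\$(k)\asymp\overline m_\varepsilon^{\,1+s}\gtrsim\varepsilon^{-2(1+s)/p_1} $$
for every $p_1>\decgup$, using that the lower bound $\overline m_\varepsilon\gtrsim\varepsilon^{-2/p_1}$ follows verbatim from the argument of \RefLem{lem:m_eps_UL}. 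This gives $p_\infty\geq 2(1+s)/\decgup$, and combining the two ceilings produces $\ConvAllInf^A\leq\min\{\decllow/2,\decgup/(2(1+s))\}$.

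The hard part will be the two-regime bookkeeping in the second paragraph: verifying that, as the level-sum switches from convergent to divergent, the admissible window $q_2\in(1,\min\{\decllow,\decglow\})$ together with $p_2\in(0,\decglow)$ still permits parameter choices reaching the exact endpoint $\max\{2/\decllow,2(1+s)/\decglow\}$ in both cases. A second, easily overlooked subtlety is the gap between lower and upper decay rates: one must extract the sharp ceiling $\decllow/2$ directly from the $n$-th minimal errors, since a complexity lower bound alone would only deliver the weaker $\declup/2$.
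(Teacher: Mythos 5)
Your proposal is correct and follows essentially the same route as the paper: both rest on the optimality of $\AOpt$ (\RefThm{th:opt_cost}), identify $\comp(\varepsilon,\HgA)$ with $\cost(\AOpt)=\sum_{k}\abs{M(\varepsilon,k)}\,\$(k)$, and bound this sum from above via statement~6 of \RefLem{lem:M_eps_k} together with \RefLem{lem:m_eps_UL}, using the same convergent/divergent regime analysis in the parameters $p_2,q_2$. Your only (harmless) deviations concern the upper bound on $\ConvAllInf^A$: you count a single functional per level up to $\overline{m}_\varepsilon$ instead of invoking the singleton count of statement~3 of \RefLem{lem:M_eps_k} (which yields the same exponent, since the per-level singleton count is of order one near the top level), and you spell out the singular-value argument behind the paper's one-line reduction to the univariate problem for the ceiling $\decllow/2$.
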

\begin{proof}
    Let $\varepsilon$ be the desired worst case error, and $\meps$ be the highest level for our algorithm.

    We first show $ \ConvAllInf^A \leq \min\left\{ \frac{\decllow}{2}, \frac{\decgup}{2(1+s)} \right\} $ using $\$(k) \gtrsim k^s$. The inequality $\ConvAllInf \leq \decllow/2$ holds since the infinite-variate problem is at least as difficult as the univariate problem. It only remains to consider the case when $\decgup/(1+s) < \decllow \leq \declup$. Since $\Mbar(\varepsilon,k) \subset M(\varepsilon,k)$, we use the lower bound on $\abs{\Mbar(\varepsilon,k)}$ from statement \ref{item3} of \RefLem{lem:M_eps_k}, and \eqref{eq:gamma_bnds} to get
    \begin{equation}\label{eq:lCostA_nss}
        \cost(\AOpt) \geq \sum_{k=1}^{\meps} \abs{\Mbar(\varepsilon,k)} \cdot \$(k) \gtrsim  \varepsilon^{-2/q_1} \sum_{k=1}^{\meps} \gamma_k^{1/q_1} k^s \gtrsim \varepsilon^{-2/q_1} \sum_{k=1}^{\meps} k^{s-p_1/q_1} ,
    \end{equation}
    where for $\delta > 0$ arbitrarily small, we chose $p_1 = \decgup(1+\delta)$ and $q_1 = \declup(1 + \delta)$. Consequently, we have $s - p_1/q_1 > -1$, and we use Lemma \ref{lem:m_eps_UL} to estimate the sum in \eqref{eq:lCostA_nss} as
    $$ \sum_{k=1}^{\meps} k^{s-p_1/q_1} \gtrsim (\meps)^{1-p_1/q_1+s} \gtrsim \varepsilon^{-2(1-p_1/q_1+s)/p_1}, $$
    which implies that $\cost(\AOpt) \gtrsim \varepsilon^{-2(1+s)/p_1}$. As $\delta \to 0$, we have $p_1 \to \decgup$ and $\ConvAllInf^A \leq \decgup/2(1+s)$.
    
    We now show that $ \ConvAllInf^A \geq \min\left\{ \frac{\decllow}{2}, \frac{\decglow}{2(1+s)} \right\}$ using $\$(k) \lesssim k^s$. We use \eqref{eq:gamma_bnds} and statement \ref{item6} from Lemma \ref{lem:M_eps_k} to get
    \begin{equation}\label{eq:uCostA_nss}
        \begin{aligned}
            \cost(\AOpt) = \sum_{k=1}^{\meps} \abs{M(\varepsilon,k)} \cdot \$(k) \lesssim \varepsilon^{-2/q_2} \sum_{k=1}^{\meps} \gamma_k^{1/q_2} k^s \lesssim \varepsilon^{-2/q_2} \sum_{k=1}^{\meps} k^{s-p_2/q_2} ,
        \end{aligned}
    \end{equation}
    where we choose $\delta > 0$ arbitrarily small so that $p_2 = \decglow(1 - \delta)$ and $q_2 = \min\{\decglow,\decllow\}(1 - \delta) > 1$. We analyze the last expression in \eqref{eq:uCostA_nss} in three cases.

    \begin{enumerate}
        \item $ \decglow/(1+s) > \decllow \implies s - p_2/q_2 < -1 $: Then $ \sum_{k=1}^{\meps} k^{s-p_2/q_2} \lesssim 1 $. Since $\decg > \decllow$ in this case, we have $q_2 = \decllow(1-\delta)$. Therefore, as $\delta \to 0$, we get $q_2 \to \decllow$ and $\ConvAllInf^A \geq \decllow/2$.
        
        \item $ \decglow/(1+s) = \decllow \implies s - p_2/q_2 = -1 $: For this case, we get the estimate 
        $\sum_{k=1}^{\meps} k^{-1} \lesssim 1 + \ln(\meps) \lesssim 1+\frac{2}{p_2} \ln(1/\varepsilon)$, where we used Lemma \ref{lem:m_eps_UL} for the last relation. We set $q_2 = \decllow(1-\delta)$ since $\decglow > \decllow$. Consequently, we have $\cost(\AOpt) \lesssim \varepsilon^{-2/q_2}(1+\ln(1/\varepsilon))$. We can ignore the logarithmic term as $\delta \to 0$ to get $q_2 \to \decllow$ and $\ConvAllInf^A \geq \decllow/2$.
        
        \item $\decglow/(1+s) < \decllow \implies s - p_2/q_2 > -1$: For an appropriate choice of $q_2$, and due to Lemma \ref{lem:m_eps_UL} we get
        $$ \sum_{k=1}^{\meps} k^{-p_2/q_2+s} \lesssim \meps^{1-p_2/q_2+s} \lesssim \varepsilon^{-2(1-p_2/q_2+s)/p_2}. $$
        Consequently, $\cost(\AOpt) \lesssim \varepsilon^{-2(1+s)/p_2}$ which is independent of $q_2$. Hence, as $\delta \to 0$, we set $p_2 \to \decglow$ to get $\ConvAllInf^A \geq \decglow/2(1+s)$.
    \end{enumerate}
    This completes the proof.
\end{proof}

\section{Non-ANOVA spaces}

Let us now turn to the case where the underlying function decomposition of our Hilbert space $\Hg$ is not of ANOVA-type. In this section, we assume that $1 < \decglow \leq \decgup < \infty$. We start by proving an upper bound on the convergence rate of $n$-th minimal errors.

\begin{proposition}\label{pr:nA_low_err}
   Let $\$(k) \gtrsim k^s$ for any $k \in \N$ and some $s > 0$. Then the optimal convergence rate for the algorithms defined as in \eqref{eq:lin_algo_all} satisfy in non-ANOVA function spaces
   \begin{equation*}
      \ConvAllInf \leq \min \left\{ \frac{\decllow}{2}, \frac{\decgup - 1}{2s} \right\} .
   \end{equation*}
\end{proposition}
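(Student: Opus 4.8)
The plan is to prove the two bounds $\ConvAllInf\le\decllow/2$ and $\ConvAllInf\le(\decgup-1)/(2s)$ separately and then take their minimum. The first is the univariate lower bound and is obtained exactly as in the proof of \RefThm{th:ANOVA_rate}: the problem on $\Hg$ restricted to the single-coordinate subspace $H_{\{1\}}$ is a univariate approximation problem, and the approximation numbers of $\Sol|_{H_{\{1\}}}$ inherit the decay rate $\decllow/2$ (they differ from the univariate ones only through the one-dimensional space of constants). Since each functional in \eqref{eq:lin_algo_all} costs at least $\$(0)\ge 1$, any algorithm of NSS-cost at most $n$ uses at most $n$ functionals, and its restriction to $H_{\{1\}}$ is an at-most-$n$-functional algorithm there; hence $e(n,\Hg)\gtrsim a_{n+1}$ with $\decay(\{a_n\})=\decllow/2$, giving $\ConvAllInf\le\decllow/2$. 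The genuinely non-ANOVA content is the second bound, which I attack via a cost-based lower bound.

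Here I exploit the defining feature of a non-ANOVA kernel. By the first item of Remark~\ref{re:ANOVA} there exists $\phi\in H(k)$ with $\norm{\phi}_{H(k)}=1$ and $\mu:=\int_D\phi\,\drho\neq 0$. For each coordinate $k$ put $\psi_k(\bsx):=\phi(x_k)\in H_{\{k\}}$, so that $\norm{\psi_k}_{\Hg}=\gamma_k^{-1/2}$ and, by independence of the coordinates under $\rho^\N$, $\innProd{\psi_k}{\psi_\ell}{\Ltwo}=\mu^2$ for $k\ne\ell$ while $\innProd{\psi_k}{\psi_k}{\Ltwo}=\norm{\phi}_{L^2(D,\rho)}^2\ge\mu^2$. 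Fix a truncation level $m$ and set $V^{>m}:=\mspan\{\psi_k:k>m\}$. The key observation is that any functional $L_\bsu$ as in \eqref{eq:info_fnl} with $\max\bsu\le m$ vanishes on $V^{>m}$: its representer lies in $\bigoplus_{\bsv\subseteq\bsu}H_\bsv$ with every $\bsv\subseteq[m]$, which is $\Hg$-orthogonal to each $H_{\{k\}}$, $k>m$, by the orthogonal decomposition \eqref{eq:Hg_decomp}. Consequently any algorithm of the form \eqref{eq:lin_algo_all} that samples only functionals with $\max\bsu\le m$ is identically zero on $V^{>m}$, so its worst-case error is at least the top singular value of $\Sol|_{V^{>m}}$.

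I bound this singular value from below through the coherent accumulation of the nonzero means. Testing with $f=\sum_{m<k\le M}\gamma_k\psi_k$ gives $\norm{f}_{\Hg}^2=\sum_{m<k\le M}\gamma_k$ and, dropping the nonnegative diagonal contribution, $\norm{\Sol f}_{\Ltwo}^2\ge\mu^2\bigl(\sum_{m<k\le M}\gamma_k\bigr)^2$, whence $\norm{\Sol f}_{\Ltwo}^2/\norm{f}_{\Hg}^2\ge\mu^2\sum_{m<k\le M}\gamma_k$; letting $M\to\infty$ shows that any such algorithm has worst-case error at least $\mu\,(\sum_{k>m}\gamma_k)^{1/2}$. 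Now let $\A$ be any algorithm of worst-case error at most $\varepsilon$ and let $m$ be the largest coordinate it touches. The previous step forces $\sum_{k>m}\gamma_k\le\varepsilon^2/\mu^2$; combining with $\gamma_k\gtrsim k^{-p_1}$ for any $p_1\in(\decgup,\infty)$ from \RefLem{le:decay_bounds}, which gives $\sum_{k>m}\gamma_k\gtrsim m^{1-p_1}$, we obtain $m\gtrsim\varepsilon^{-2/(p_1-1)}$. Since $\A$ samples at least one functional with $\max\bsu=m$, of cost $\$(m)\gtrsim m^{s}$, we conclude $\comp(\varepsilon,\Hg)\ge\cost(\A)\gtrsim\varepsilon^{-2s/(p_1-1)}$. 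As this holds for every $p_1>\decgup$, we get $p_\infty\ge 2s/(\decgup-1)$, i.e. $\ConvAllInf=1/p_\infty\le(\decgup-1)/(2s)$, which together with the univariate bound proves the claim.

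The main obstacle, and the only place where the non-ANOVA property is used essentially, is the coherent tail estimate $\norm{\Sol|_{V^{>m}}}^2\gtrsim\sum_{k>m}\gamma_k$. In the ANOVA case $\mu=0$ kills the cross terms, the tail subspace decouples, and the corresponding singular value is only of order $\gamma_{m+1}\asymp m^{-\decgup}$; here the nonvanishing means make the contributions of all tail coordinates add in phase, producing the strictly larger quantity $\sum_{k>m}\gamma_k\asymp m^{-(\decgup-1)}$. This gain of one power of $m$ is exactly what upgrades the ANOVA-type threshold to $m\gtrsim\varepsilon^{-2/(\decgup-1)}$ and thereby lowers the attainable rate. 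The remaining work is bookkeeping: the implied constants in $\gamma_k\gtrsim k^{-p_1}$ blow up as $p_1\downarrow\decgup$, so the estimate must be read as a family indexed by $p_1>\decgup$, and the passage to the decay rate via $\ConvAllInf=1/p_\infty$ should be made for each fixed $p_1$ before taking the supremum over $p_1$.
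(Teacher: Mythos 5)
Your proof is correct and follows essentially the same route as the paper's: both arguments hinge on the same test function, a coherent sum $\sum_{j>m}\gamma_j\,h(x_j)$ of a nonzero-mean univariate function over the coordinates the algorithm never touches, whose cross terms produce the tail $\sum_{j>m}\gamma_j\asymp m^{1-\decgup}$ rather than $\gamma_{m+1}$. The only difference is bookkeeping direction — you fix the error $\varepsilon$ and bound the cost from below via $\$(m)\gtrsim m^s$, while the paper fixes the cost budget $N$, sets $L=\sup\{k:\$(k)\le N\}$, and bounds the error from below — and these are equivalent via $r_\infty=1/p_\infty$.
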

\begin{proof}
   The bound $\ConvAllInf \leq \decllow/2$ follows from the rate of convergence for the univariate problem. % This can be shown by considering for each $g \in H(1+k)$ the function $f \in \Hg$ given by $f(\bsx) := g(x_1)$ for all $\bsx \in \X$.
   
   To show the other part, we adapt the proof strategy from \cite[Proposition~4]{Was12}. Let $\A_N$ be a linear algorithm using finitely many function evaluations, such that $\cost(\A_N) \leq N$. We now choose $L := \sup \left\{ k : \$(k) \leq N \right\}.$
   Due to this definition, $[L]$ is a superset of all the indices that the algorithm $\A_N$ relies on. Since we assumed that $\$(k) \gtrsim k^s$, there exists a constant $c>0$ such that $k^s/c^s \leq \$(k)$, implying $L \leq c N^{1/s}$.
   
   Consider a univariate function $h \in H(k)$ such that $\norm{h}_{H(k)} = 1$ and $c_1 := \int_D h(x) \drho(x) \neq 0$, cf. Remark \ref{re:ANOVA}. Then define $f^\ast \in \Hg$ as
   \begin{equation*}
       f^\ast(\bsx) := \frac{1}{\kappa} \sum_{j > L} \gamma_j h(x_j) , \quad \text{with } \kappa := \sqrt{ \sum_{j > L} \gamma_j } .
   \end{equation*}
   Accordingly, we get $\A_N(f^\ast) = 0$ due to our choice of $L$ and $f^\ast$, as shown in \cite{Was12}. Moreover, $\norm{f^\ast}_{\Hg} = 1$, and also
   $$ \norm{f^\ast}_{\Ltwo}^2 = \left( \norm{h}_{L^2(D,\rho)}^2 - c_1^2 \right) \frac{\sum_{j > L} \gamma_j^2}{\sum_{j > L} \gamma_j} + c_1^2 \sum_{j > L} \gamma_j. $$

   We need an estimate only for the second term in the equation above since the first term is positive but much smaller. It follows from \RefLem{le:decay_bounds} that $\sum_{j > L} \gamma_j \gtrsim L^{1-\decgup - \delta} \gtrsim N^{(1-\decgup - \delta)/s}$ for all $\delta > 0$. From the construction of $f^\ast$, we have for the error that
   \begin{equation*}
       e(\A_N; \Hg) = \sup_{\norm{f}_{\Hg} = 1} \norm{f - \A_N(f)}_{\Ltwo} \geq \norm{f^\ast}_{\Ltwo} \gtrsim N^{(1-\decgup - \delta)/2s}.
   \end{equation*}
   This shows that $\ConvAllInf \leq (\decgup - 1)/2s$, and the result follows.
\end{proof}

For non-ANOVA spaces, Lemma \ref{lemma:decompW} does not hold in general, which means that we do not know how to express the eigenpairs of $\W : \Hg \to \Hg$ in terms of the eigenpairs of $\Wu$.
% Consequently, in comparison to the ANOVA setting, \eqref{eq:ortho_Hg} holds also in this setting but \eqref{eq:ortho_L2} does not.
For the upper error bound, we use results from \cite{Was12} which are stated below for the reader's convenience.

\begin{lemma}\label{le:scaled_Wt_nA}
    For all $c \in (1/\decglow , 1)$ it holds that
    % \begin{equation}
        $\sup_{\bsu \in \UUg} \gmu^{1-c} \norm{S}^{2\abs{\bsu}} < \infty.$
    % \end{equation}
    \begin{enumerate}
        \item For
        $$ \ghc = \{ \widehat{\gamma}_{\bsu,c} \}_{\bsu \in \UUg}, \quad \gmh_{\bsu,c} := \gmu^{1-c},$$
        we define $ \phi : \Hg \to \Hghc, f \mapsto \sum_{\bsu \in \UUg} \gamma_{\bsu}^{-c/2} \fu $. Then $\phi$ is an isometric isomorphism between $\Hg$ and $\Hghc$. Furthermore, $\Hg \subseteq \Hghc$.

        \item Let $\A$ be a linear continuous algorithm satisfying
        \begin{equation}\label{eq:algo_cond}
            \A(\Hu) \subseteq L^2(\Du, \rhou),
        \end{equation}
        for all $\bsu \in \UUg$. Then we get the upper bound
        \begin{equation}\label{eq:nA_err}
            \norm{\Sol(f) - \A(f)}_{\Ltwo}^2 \leq C_{\bsgamma} \sum_{\bsu \in \UUg}  \norm{S_{\bsu}(\fh_{\bsu}) - A_{\bsu} (\fh_{\bsu})}^2_{\Ltwou}
        \end{equation}
        where $C_{\bsgamma} = \sum_{\bsv \in \UUg} \gmv^c$, $\phi(f) =: \fh = \sum_{\bsu \in \UUg} \fh_\bsu$ for all $f \in \Hg$, and $\Au : \Hu \to \Ltwou$ such that $\Au(f) = \A(f)$ for all $f \in \Hu$.
        %$$ e(\A, \Sol, \Hg)^2 \leq \sum_{\bsu \in \UUg}  e(\Au, \Su, \tilde{\Hu})^2, $$ where $\tilde{\Hu} = \frac{C_{\bsgamma}}{\gamma_{\bsu}^c} \Hu$.
    \end{enumerate}
\end{lemma}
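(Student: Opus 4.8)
The plan is to prove the three assertions in turn, all by direct computation exploiting the product structure of the weights and the orthogonal decomposition \eqref{eq:Hg_decomp}; the only genuinely non-routine point is the loss of orthogonality in the non-ANOVA setting, which I address in the last step.

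First I would establish the auxiliary bound $\sup_{\bsu\in\UUg}\gmu^{1-c}\norm{S}^{2\abs{\bsu}}<\infty$. Using the product weights I write $\gmu^{1-c}\norm{S}^{2\abs{\bsu}}=\prod_{j\in\bsu}(\gamma_j^{1-c}\norm{S}^2)$. Since $\decglow>1$ forces $\gamma_j\to 0$ and $1-c>0$, the factors $\gamma_j^{1-c}\norm{S}^2$ tend to $0$; hence only finitely many of them exceed $1$, and the supremum over all finite $\bsu$ is attained by collecting exactly those indices, giving the finite bound $\prod_{j}\max\{1,\gamma_j^{1-c}\norm{S}^2\}$.

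For part 1, recall that the norm on $\Hghc$ is $\norm{g}_{\Hghc}^2=\sum_{\bsu\in\UUg}\gmu^{-(1-c)}\norm{g_\bsu}_{\Hu}^2$. For $g=\phi(f)$ the $\bsu$-component is $g_\bsu=\gmu^{-c/2}\fu$, so $\norm{g_\bsu}_{\Hu}^2=\gmu^{-c}\norm{\fu}_{\Hu}^2$ and the weight factors telescope, $\gmu^{-(1-c)}\gmu^{-c}=\gmu^{-1}$, whence $\norm{\phi(f)}_{\Hghc}^2=\sum_\bsu\gmu^{-1}\norm{\fu}_{\Hu}^2=\norm{f}_{\Hg}^2$. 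Thus $\phi$ is a linear isometry; its inverse $g\mapsto\sum_\bsu\gmu^{c/2}g_\bsu$ is defined analogously, so $\phi$ is an isometric isomorphism. The inclusion $\Hg\subseteq\Hghc$ then follows because $\gmu\le 1$ and $1-c\in(0,1)$ give $\gmu^{1-c}\ge\gmu$, so the $\Hghc$-weights dominate the $\Hg$-weights and $\norm{f}_{\Hghc}\le\norm{f}_{\Hg}$.

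For part 2, the key observation is that $\fu=\gmu^{c/2}\fh_\bsu$. Using the continuity and linearity of $\Sol$ and $\A$, together with \eqref{eq:algo_cond}, I write $\Sol(f)-\A(f)=\sum_\bsu(S_\bsu-A_\bsu)(\fu)=\sum_\bsu\gmu^{c/2}(S_\bsu-A_\bsu)(\fh_\bsu)$, where each summand lies in $\Ltwou$ so that its $\Ltwo$-norm equals its $\Ltwou$-norm. I then apply the triangle inequality followed by the weighted Cauchy--Schwarz inequality with weights $\gmu^{c/2}$, whose squares sum to $C_{\bsgamma}=\sum_\bsv\gmv^c$, yielding exactly \eqref{eq:nA_err}. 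The main obstacle, and the whole reason the factor $C_{\bsgamma}$ appears, is that in the non-ANOVA setting the images $(S_\bsu-A_\bsu)(\fh_\bsu)$ are no longer mutually orthogonal in $\Ltwo$ (contrast \eqref{eq:anova_finite} and \RefLem{le:err_decomp_A}), so Pythagoras is unavailable and one must instead absorb the cross terms through Cauchy--Schwarz; the bound is useful precisely because $C_{\bsgamma}<\infty$, equivalently $\sum_j\gamma_j^c<\infty$, which holds exactly when $c>1/\decglow$.
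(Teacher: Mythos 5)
Your proof is correct and follows the same route as the argument the paper relies on: the paper does not write the proof out but defers all three claims to \cite[Lemma~2]{Was12}, whose derivation is precisely your weight splitting $\fu = \gmu^{c/2}\fh_{\bsu}$ followed by the triangle and Cauchy--Schwarz inequalities. Your reconstruction of the omitted details --- the bound $\prod_{j}\max\{1,\gamma_j^{1-c}\norm{S}^{2}\}<\infty$ via the product structure of the weights, the telescoping $\gmu^{-(1-c)}\gmu^{-c}=\gmu^{-1}$ for the isometry, the domination $\gmu^{1-c}\ge\gmu$ for the inclusion, and the observation that $C_{\bsgamma}<\infty$ exactly because $c>1/\decglow$ --- is accurate.
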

It is straightforward to check that $\phi$ defines an isometric isomorphism. $\Hg \subseteq \Hghc$ is due to \cite[Lemma~2]{Was12}. The inequality \eqref{eq:nA_err} has been derived in the proof of \cite[Lemma~2]{Was12} for any algorithm $\A$ satisfying \eqref{eq:algo_cond}.

\begin{proposition}\label{Prop:low_bound_non-ANOVA}
    For any $\varepsilon \in (0,1)$, define the set $\Mgheps$ as in \eqref{eq:opt_set} and also the algorithm $\AOptc : \Hghc \to \Ltwo$ as defined in \eqref{eq:opt_algo}. Then the algorithm $\AOptc$ satisfies \eqref{eq:algo_cond} and $e(\AOptc, \Sol, \Hg) \leq \varepsilon$. The algorithm $\AOptc$ establishes that
    \begin{equation*}
        \ConvAllInf \geq \min \left\{ \frac{\decllow}{2}, \frac{\decglow-1}{2(1+s)} \right\}.
    \end{equation*}
\end{proposition}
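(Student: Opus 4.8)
The plan is to reduce the non-ANOVA problem to the ANOVA one by working in the auxiliary space $\Hghc$ supplied by \RefLem{le:scaled_Wt_nA}. Since $\ghc$ is generated by the product sequence $\{\gamma_j^{1-c}\}_{j\in\N}$ and $\Hghc$ carries the \emph{ANOVA} kernel, the algorithm $\AOptc$ is precisely the optimal ANOVA algorithm of Section~\ref{sec:ANOVA} for $\Hghc$, so its cost is governed by the machinery of \RefLem{lem:M_eps_k}. What changes on passing back to $\Hg$ is only the error, for which we pay the $\varepsilon$-independent factor $C_{\bsgamma}=\sum_{\bsv\in\UUg}\gmv^{c}$; for product weights $C_{\bsgamma}=\prod_{j}(1+\gamma_j^{c})$ is finite precisely because $c>1/\decglow$ forces $\sum_j\gamma_j^{c}<\infty$.

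I would first build $\AOptc$ with global threshold $\delta:=\varepsilon/\sqrt{C_{\bsgamma}}$, i.e.\ over the set $M_{\ghc}(\delta)$. Admissibility \eqref{eq:algo_cond} is immediate from \eqref{eq:opt_algo}, since every summand lies in $\Su(\Hu)\subseteq\Ltwou$. For the error, write $\fh=\phi(f)=\sum_{\bsu\in\UUg}\fh_{\bsu}$ and note that the restriction $(\AOptc)_{\bsu}$ is the optimiser $A^{\ast}_{\bsu,\delta/\sqrt{\ghuc}}$ of \eqref{eq:opt_algo_u}, so \RefLem{le:opt_err_u} gives $\norm{\Su(\fh_{\bsu})-(\AOptc)_{\bsu}(\fh_{\bsu})}_{\Ltwou}^{2}\le(\delta^{2}/\ghuc)\norm{\fh_{\bsu}}_{\Hu}^{2}$. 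Substituting this into the transfer bound \eqref{eq:nA_err} of \RefLem{le:scaled_Wt_nA} and using that $\phi$ is an isometry yields
\begin{equation*}
    \norm{\Sol(f)-\AOptc(f)}_{\Ltwo}^{2}
    \le C_{\bsgamma}\,\delta^{2}\sum_{\bsu\in\UUg}\frac{\norm{\fh_{\bsu}}_{\Hu}^{2}}{\ghuc}
    =C_{\bsgamma}\,\delta^{2}\,\norm{\fh}_{\Hghc}^{2}
    =\varepsilon^{2}\,\norm{f}_{\Hg}^{2},
\end{equation*}
so that $e(\AOptc,\Sol,\Hg)\le\varepsilon$.

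It then remains to bound $\cost(\AOptc)$ in the NSS model and read off the rate. Because $\AOptc$ is the ANOVA-optimal algorithm for the product weights $\{\gamma_j^{1-c}\}_j$, whose lower decay rate is $\declow_{\ghc}=(1-c)\,\decglow$ while the eigenvalue decay $\decllow$ is unchanged, the cost estimate \eqref{eq:uCostA_nss} applies with $\decglow$ replaced throughout by $(1-c)\decglow$. Carrying the three-case analysis of \RefThm{th:ANOVA_rate} through with this substitution (and absorbing the constant $\sqrt{C_{\bsgamma}}$ into $\varepsilon$, which does not affect the exponent) shows that $\AOptc$ certifies
\begin{equation*}
    \ConvAllInf(\Hg)\ \ge\ \min\left\{\frac{\decllow}{2},\ \frac{(1-c)\,\decglow}{2(1+s)}\right\}
    \qquad\text{for every }c\in\left(\tfrac{1}{\decglow},1\right).
\end{equation*}
Taking the supremum over $c$ and letting $c\downarrow 1/\decglow$ sends $(1-c)\decglow\uparrow\decglow-1$, which gives the claimed bound.

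The main obstacle is the two competing demands on $c$: finiteness of $C_{\bsgamma}$ forces $c>1/\decglow$, whereas a large auxiliary decay $(1-c)\decglow$ forces $c$ small, and as $c\downarrow 1/\decglow$ the auxiliary decay only approaches $\decglow-1$, which may be $\le 1$. Consequently one cannot simply quote \RefThm{th:ANOVA_rate} (whose hypothesis requires weight decay exceeding $1$) but must re-run its cost estimate, checking that the bounds of \RefLem{lem:M_eps_k} — in particular \eqref{eq:prodWt} and the convergence of $c(\bslambda,q_2)=\sum_j\lambda_j^{1/q_2}$ — remain valid for an exponent $q_2\in(0,\min\{\decllow,(1-c)\decglow\})$ even when this interval lies below $1$. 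Verifying that the three-regime computation behind \eqref{eq:uCostA_nss} survives this relaxation, uniformly as $c\downarrow 1/\decglow$, is the delicate point; the constant $C_{\bsgamma}$ itself is harmless since it is fixed once $c$ is.
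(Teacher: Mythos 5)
Your proposal is correct and follows essentially the same route as the paper: transfer to the auxiliary ANOVA-type space $\Hghc$ via \RefLem{le:scaled_Wt_nA}, bound the error componentwise with \RefLem{le:opt_err_u}, observe that the cost of $\AOptc$ is the same on $\Hg$ as on $\Hghc$, rerun the cost analysis behind \eqref{eq:uCostA_nss} with the weight decay replaced by $(1-c)\decglow$, and let $c \downarrow 1/\decglow$. Your two refinements --- rescaling the threshold to $\varepsilon/\sqrt{C_{\bsgamma}}$ so as to obtain exactly $e(\AOptc,\Sol,\Hg)\le\varepsilon$ rather than $\lesssim\varepsilon$, and flagging that $(1-c)\decglow$ may fall below $1$ so that \RefThm{th:ANOVA_rate} cannot be quoted verbatim --- are both legitimate points the paper glosses over; the second is harmless because none of the estimates in \RefLem{lem:M_eps_k} genuinely needs $q_2>1$ (only $q_2<\decllow$ for $c(\bslambda,q_2)<\infty$ and $q_2$ below the weight decay for \eqref{eq:prodWt}), and no uniformity in $c$ is required since the rate is extracted for each fixed $c$ and only then is the supremum over $c$ taken.
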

\begin{proof}
    Using Lemmas \ref{le:scaled_Wt_nA} and \ref{le:opt_err_u}, we get
    \begin{equation*}
        \begin{aligned}
            &\norm{\Sol(f) - \AOptc(f)}_{\Ltwo}^2 \lesssim \sum_{\bsu \in \UUg} \norm{S_{\bsu}(\fh_{\bsu}) - A_{\bsu,\varepsilon/\sqrt{\gmh_{\bsu,c}}}^\ast (\fh_{\bsu})}^2_{L^2(D^{\bsu},\rho^{\bsu})} \\
            &\leq \varepsilon^2 \sum_{\bsu \in \UUg} \frac{1}{\gmh_{\bsu,c}} \norm{\fh_\bsu}^2_{\Hu} = \varepsilon^2 \norm{\fh}^2_{\Hghc} = \varepsilon^2 \norm{f}^2_{\Hg}.
        \end{aligned}
    \end{equation*}

    Notice that $\AOptc$ has the same cost in $\Hghc$ and in its subspace $\Hg$, since the function space decomposition in both spaces rely on the same family of subspaces $(\Hu)_{\bsu \in \UUg}$. Let $\declow_{\ghc}$ denote the lower decay rate of product weights $\ghc$. Then from \eqref{eq:uCostA_nss}, and the subsequent analysis in Theorem \ref{th:ANOVA_rate}, we get
    $$ \ConvAllInf \geq \min\left\{ \frac{\decllow}{2} , \frac{\declow_{\ghc}}{2(1+s)} \right\}. $$
    Finally, $\declow_{\ghc} = \decglow(1-c) \xrightarrow{c \to 1/\decglow} \decglow-1$. This completes the proof.
\end{proof}

\begin{corollary}\label{cor:comparison}
    Assume that $\decglow = \decgup =: \decg$. If $1 < \decg < 1+s$, then the convergence rate of $L^2$-approximation in ANOVA spaces is strictly larger than the one in non-ANOVA spaces.
\end{corollary}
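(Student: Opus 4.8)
The plan is to combine the \emph{exact} ANOVA rate furnished by \RefThm{th:ANOVA_rate} with the upper bound on the non-ANOVA rate from \RefProp{pr:nA_low_err}, working throughout in the cost model $\$(k)\asymp k^s$ that both results presuppose. The whole claim then reduces to two elementary inequalities among the three quantities
$$ X := \frac{\decllow}{2}, \qquad A := \frac{\decg}{2(1+s)}, \qquad B := \frac{\decg-1}{2s}. $$
Since the hypothesis is $\decglow=\decgup=\decg$, the lower and upper bounds in \RefThm{th:ANOVA_rate} collapse to a single value, so the ANOVA rate is known exactly, $\ConvAllInf^A = \min\{X,A\}$, while \RefProp{pr:nA_low_err} gives for the non-ANOVA rate only the one-sided bound $\ConvAllInf \le \min\{X,B\}$.

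First I would verify $A > B$. Clearing the positive denominators, $\tfrac{\decg}{2(1+s)} > \tfrac{\decg-1}{2s}$ is equivalent to $s\decg > (\decg-1)(1+s)$, which after expanding is exactly the hypothesis $\decg < 1+s$; so this inequality is not merely sufficient but equivalent to the assumption and causes no difficulty. Next I would check $X > B$: the inequality $\tfrac{\decllow}{2} > \tfrac{\decg-1}{2s}$ is equivalent to $s\decllow > \decg-1$, and this follows for free, since the standing assumption $\decllow > 1$ yields $s\decllow > s$, while $\decg < 1+s$ yields $\decg-1 < s$, so that $s\decllow > s > \decg-1$.

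Finally I would assemble the pieces. Because $X > B$ we have $\min\{X,B\}=B$, hence $\ConvAllInf \le B$; and because both $X > B$ and $A > B$, the ANOVA rate satisfies $\ConvAllInf^A = \min\{X,A\} > B \ge \ConvAllInf$, which is precisely the asserted strict inequality.

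I do not expect a genuine obstacle here, as everything is an elementary comparison of rationals once the two earlier results are invoked. The only point demanding care is the source of the \emph{strictness}: it rests entirely on the common, smoothness-limited term $X = \decllow/2$ strictly exceeding $B$, and one should note that this is guaranteed automatically by the standing assumption $\decllow > 1$ in tandem with $\decg < 1+s$, rather than being an extra condition that must be imposed; without $\decllow>1$ one could have $X \le B$, in which case both rates would equal $X$ and the inequality would degenerate to an equality.
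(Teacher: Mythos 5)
Your proposal is correct and follows essentially the same route as the paper: both reduce the claim to comparing $\min\{\decllow/2,\,\decg/(2(1+s))\}$ from Theorem~\ref{th:ANOVA_rate} (exact under $\decglow=\decgup$) with the bound $\min\{\decllow/2,\,(\decg-1)/(2s)\}$ from Proposition~\ref{pr:nA_low_err}, and both verify the same elementary inequalities driven by $\decllow>1$ and $\decg<1+s$. The only cosmetic difference is that the paper also identifies the ANOVA minimum as $\decg/(2(1+s))$ explicitly, whereas you bypass that by noting both candidates exceed $(\decg-1)/(2s)$; either is fine.
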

\begin{proof}
Note that due to $1 < \decllow$, the condition $\decg < 1+s$ implies $\decllow > (\decg - 1)/s$.
Hence, under the given condition, the exact convergence rate of $n$-th minimal errors in the ANOVA case from Theorem \ref{th:ANOVA_rate} is greater than the largest possible convergence rate  in the non-ANOVA case from Proposition \ref{pr:nA_low_err}, i.e.
$$ \min \left\{\frac{\decllow}{2}, \frac{\decg - 1}{2s}\right\} = \frac{\decg - 1}{2s} < \min \left\{\frac{\decllow}{2}, \frac{\decg}{2(1+s)}\right\} = \frac{\decg}{2(1+s)}. $$
The statement is hence proved.
\end{proof}

As we see from the analysis, there is a gap between the convergence rates in the ANOVA and the non-ANOVA setting. Note that we can use the function $f^\ast$ from \RefProp{pr:nA_low_err} to obtain the same upper bound on the convergence rate for the integration problem in the non-ANOVA setting, $I : \Hg \to \R$. More specifically, we have $I(f^\ast) = c_1 \sqrt{\sum_{j > L} \gamma_j } \gtrsim N^{(1 - \decgup - \delta)/2s}$, which implies that the convergence rate is at most $(\decgup - 1)/2s$. On the other hand, the integration problem is trivial in the ANOVA setting. This provides some intuition for the gap since we know that the function approximation problem is, in general, not easier than the integration problem.

\begin{acknowledgement}
    We thank an anonymous referee for helpful comments, and for pointing out the second example in Remark \ref{re:decay_ex}.
\end{acknowledgement}

\section*{Appendix}
\addcontentsline{toc}{section}{Appendix}

For a null sequence of positive reals $\bsx := \{ x_j \}_{j \in \N}$, set $\declow_1 := \declow_{\bsx}$ as in \eqref{def:decay} and define the following quantities:

\begin{eqnarray}
    \declow_{2} &:= &\sup \{ \alpha \geq 0 : \lim_{n \to \infty} x_n n^\alpha = 0 \}, \quad \text{and} \label{def:dec_low2}\\
    \declow_3 &:= &\sup \{ \alpha \geq 0 : \sup_{n \in \N} x_n n^\alpha < \infty \}. \label{def:dec_low3}
\end{eqnarray}

\begin{lemma}\label{le:dec_low_equiv}
    If $\bsx$ is a non-increasing null sequence of positive reals, then we have the following identity
    $$ \declow_1 = \declow_2 = \declow_3. $$
\end{lemma}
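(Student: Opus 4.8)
The plan is to prove the three equalities by establishing the cyclic chain of inequalities $\declow_1 \le \declow_2 \le \declow_3 \le \declow_1$, which forces all three quantities to coincide. Two of the three links are elementary and do not use the hypothesis; the remaining one is where monotonicity is indispensable, and it will be the main obstacle.

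First I would dispatch $\declow_2 \le \declow_3$: any $\alpha \ge 0$ admissible in \eqref{def:dec_low2} satisfies $x_n n^\alpha \to 0$, so in particular the sequence $\{x_n n^\alpha\}_{n\in\N}$ is bounded and $\alpha$ is admissible in \eqref{def:dec_low3}; taking suprema gives the inequality. Next, for $\declow_3 \le \declow_1$, I would fix an $\alpha$ admissible for $\declow_3$, so that $x_n \le C n^{-\alpha}$ for some $C>0$ and all $n$, and then for every $\beta \in (0,\alpha)$ estimate $x_n^{1/\beta} \le C^{1/\beta} n^{-\alpha/\beta}$. Since $\alpha/\beta > 1$, the series $\sum_n n^{-\alpha/\beta}$ converges, hence $\sum_n x_n^{1/\beta} < \infty$ and $\beta$ is admissible for $\declow_1$. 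Letting $\beta \uparrow \alpha$ yields $\declow_1 \ge \alpha$, and taking the supremum over all admissible $\alpha$ gives $\declow_1 \ge \declow_3$. Note that neither of these two steps uses that $\bsx$ is non-increasing.

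The key step is $\declow_1 \le \declow_2$, and here the monotonicity is essential. I would fix $\alpha > 0$ admissible for $\declow_1$, i.e.\ $\sum_j x_j^{1/\alpha} < \infty$, and set $a_j := x_j^{1/\alpha}$. Because $\bsx$ is non-increasing and $t \mapsto t^{1/\alpha}$ is increasing, $\{a_j\}_{j\in\N}$ is a non-increasing, nonnegative, summable sequence, and I would invoke the classical fact that such sequences satisfy $n a_n \to 0$. The short proof of this fact I would include: convergence of the series makes its tails vanish, while monotonicity gives $\sum_{k=\lfloor n/2\rfloor+1}^{n} a_k \ge (n-\lfloor n/2\rfloor)\,a_n \ge \tfrac{n}{2}\,a_n$, whose left-hand side tends to $0$; hence $n a_n \to 0$. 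From $n a_n \to 0$ one obtains $x_n^{1/\alpha} = o(1/n)$, and raising to the power $\alpha$ gives $x_n n^\alpha \to 0$, so $\alpha$ is admissible for $\declow_2$. Taking suprema yields $\declow_1 \le \declow_2$.

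Finally I would combine the three inequalities into $\declow_1 \le \declow_2 \le \declow_3 \le \declow_1$, forcing equality throughout. The only genuine obstacle is the summable-implies-$na_n\to0$ lemma used in the $\declow_1 \le \declow_2$ step; the rest is bookkeeping with monotone comparison series and the definitions. The only place warranting a word of care is the boundary value $\alpha = 0$, where all three admissibility conditions hold trivially for a null sequence, so the boundary does not affect any of the suprema.
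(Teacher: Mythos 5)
Your proof is correct. It follows the same overall cyclic-chain strategy as the paper, but traverses the cycle in the opposite direction: the paper proves $\declow_1 \le \declow_3 \le \declow_2 \le \declow_1$, whereas you prove $\declow_1 \le \declow_2 \le \declow_3 \le \declow_1$. The difference shows up in where and how monotonicity enters. The paper's monotonicity step is the elementary partial-sum bound $N x_N^{1/\alpha} \le \sum_{n=1}^N x_n^{1/\alpha}$, which only yields boundedness of $\{x_n n^\alpha\}$ (i.e.\ $\declow_1 \le \declow_3$); it then sacrifices an $\varepsilon$ in the exponent to upgrade boundedness to convergence to zero ($\declow_3 \le \declow_2$). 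You instead invoke, and correctly prove via the tail estimate $\sum_{k=\lfloor n/2\rfloor+1}^{n} a_k \ge \tfrac{n}{2}a_n$, the classical fact (Olivier's theorem) that a summable non-increasing sequence satisfies $n a_n \to 0$; this takes you from $\declow_1$ directly to $\declow_2$ with no loss in the exponent. Your remaining two links ($\declow_2 \le \declow_3$ trivially, and $\declow_3 \le \declow_1$ by comparison with $\sum_n n^{-\alpha/\beta}$ for $\beta < \alpha$) are the easy ones and mirror the paper's $\declow_3 \le \declow_2$ and $\declow_2 \le \declow_1$ steps, which likewise cost an $\varepsilon$ that is harmless because all three quantities are suprema. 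In short: your route uses a marginally stronger (but still short) classical lemma to avoid one piece of $\varepsilon$-bookkeeping; the paper's route uses only the crudest partial-sum estimate at the price of a limiting argument in an extra link. Both are complete and correct, and your remark about the boundary case $\alpha = 0$ is a reasonable, if not strictly necessary, precaution.
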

\begin{proof}
    We first claim $\declow_1 \leq \declow_3$. Let $\alpha < \declow_1$. Using the monotonicity of $\bsx$, we get for all $N \in \N$
    $$ Nx_N^{1/\alpha} \leq \sum_{n=1}^N x_n^{1/\alpha} \leq \sum_{n=1}^\infty x_n^{1/\alpha} =: c_1 < \infty.$$ % x_N \leq c^\alpha N^{-\alpha}.$$
    Therefore, for all $N \in \N$, we have $x_N N^{\alpha} \leq c_1^{\alpha}$, implying $\alpha \leq \declow_3$. As $\alpha \to \declow_1$, we have the claim.

    Next, we show $\declow_3 \leq \declow_2$. Let $\alpha < \declow_3$. Then we know that $\{ x_n n^\alpha \}$ is bounded. Consequently, for every $\varepsilon > 0$, the sequence $\{ x_n n^{\alpha - \varepsilon} \}$ converges to zero, and $\alpha - \varepsilon \leq \declow_2$. In the limit as $\varepsilon \to 0$ and $\alpha \to \declow_3$, we have the result.

    Finally, we prove $\declow_2 \leq \declow_1$. Let $\alpha < \declow_2$ and $c_2 > 0$. From \eqref{def:dec_low2}, we know that there exists $N_2 \in \N$ such that for all $n \geq N_2$, we have $x_n \leq c_2 n^{-\alpha}$. Then for all $\varepsilon > 0$
    \begin{equation*}
        \sum_{n=1}^\infty x_n^{1/(\alpha - \varepsilon)} = \sum_{n=1}^{N_2-1} x_n^{1/(\alpha - \varepsilon)} + \sum_{n={N_2}}^{\infty} x_n^{1/(\alpha - \varepsilon)} \leq \sum_{n=1}^{N_2-1} x_n^{1/(\alpha - \varepsilon)} + c_2^{\frac{1}{\alpha - \varepsilon}}\sum_{n={N_2}}^{\infty} \frac{1}{n^{\frac{\alpha}{\alpha-\varepsilon}}} < \infty,
    \end{equation*}
    which means $\alpha - \varepsilon \leq \declow_1$. The result follows as $\varepsilon \to 0$ and $\alpha \to \declow_2$.

    The identity is hence proved.
\end{proof}

\begin{lemma}\label{le:dec_up_low}
    Let $\decup_{\bsx}$ be defined as in \eqref{def:decay_up}. If $\bsx$ is a non-increasing null sequence of positive reals, then we have the following inequality
    \begin{equation*}
        \declow_\bsx \leq \decup_{\bsx}.
    \end{equation*}
\end{lemma}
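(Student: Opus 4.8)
The plan is to reduce the inequality to an elementary comparison between the two kinds of polynomial envelopes that the lower and upper decay rates encode, exploiting the alternative characterization of $\declow_{\bsx}$ just established. First I would invoke \RefLem{le:dec_low_equiv} to replace $\declow_{\bsx}$ by $\declow_3 = \sup\{\alpha \geq 0 : \sup_{n} x_n n^\alpha < \infty\}$. This is the crucial preparatory move: it rephrases the lower rate as a statement about an upper bound $x_n \lesssim n^{-\alpha}$, which sits in exactly the same ``$\sup_n$ versus $\inf_n$'' framework as the upper rate, and so makes the two directly comparable.

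Next I would unwind both the supremum and the infimum into pointwise bounds. Fix any $\alpha$ admissible for $\declow_3$, so that $C := \sup_n x_n n^\alpha < \infty$ and hence $x_n \leq C n^{-\alpha}$ for every $n$; and fix any $\beta$ admissible for $\decup_{\bsx}$, so that $c := \inf_n x_n n^\beta > 0$ and hence $x_n \geq c n^{-\beta}$ for every $n$. Chaining these two inequalities gives $c n^{-\beta} \leq x_n \leq C n^{-\alpha}$, that is, $c/C \leq n^{\beta - \alpha}$ for all $n \in \N$. Were $\alpha > \beta$, the right-hand side would tend to $0$ as $n \to \infty$, contradicting that $c/C$ is a fixed positive constant; therefore $\alpha \leq \beta$.

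To finish I would assemble the suprema and infima in the correct order. Since the inequality $\alpha \leq \beta$ holds for \emph{every} admissible $\alpha$, each admissible $\beta$ is an upper bound for the set defining $\declow_3$, whence $\declow_{\bsx} = \declow_3 \leq \beta$; taking the infimum over all admissible $\beta$ then yields $\declow_{\bsx} \leq \decup_{\bsx}$. The lone case outside this comparison is when no $\beta$ is admissible for $\decup_{\bsx}$, but then $\decup_{\bsx} = \inf\emptyset = +\infty$ and the claim is trivial.

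I do not anticipate a real obstacle: the monotonicity hypothesis on $\bsx$ enters only through \RefLem{le:dec_low_equiv}, and once that identity is available the core is just the remark that a common upper envelope $C n^{-\alpha}$ and lower envelope $c n^{-\beta}$ force $\alpha \leq \beta$. The only thing requiring mild care is the bookkeeping of edge cases, namely the empty admissible set for $\decup_{\bsx}$ treated above, and the harmless fact that $\alpha = 0$ always lies in the $\declow_3$ set since $\sup_n x_n = x_1 < \infty$, so that the supremum defining $\declow_3$ is taken over a nonempty set.
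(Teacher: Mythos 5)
Your proposal is correct and follows essentially the same route as the paper: both pass from $\declow_{\bsx}$ to $\declow_3$ via \RefLem{le:dec_low_equiv} and then compare the polynomial upper envelope $x_n \leq C n^{-\alpha}$ with the lower envelope $x_n \geq c n^{-\beta}$. Your direct chaining of the two envelopes (and explicit handling of the empty admissible set) is a slightly cleaner packaging of the paper's $\varepsilon$-shift argument, but the underlying idea is identical.
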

\begin{proof}
    Due to \RefLem{le:dec_low_equiv}, we know $\declow_\bsx = \declow_3$. Let $\alpha < \declow_3$. Then for all $\varepsilon > 0$
    \begin{equation*}
        \sup_{n \in \N} x_n n^\alpha < \infty \implies \inf_{n \in \N} x_n n^{\alpha - \varepsilon} = 0 \implies \alpha-\varepsilon \leq \decup_{\bsx}.
    \end{equation*}
    Taking the limit as $\varepsilon \to 0$ and $\alpha \to \declow_3$, we get the desired inequality.
\end{proof}

\begin{remark}\label{re:decay_ex}
    We provide two examples of sequences to show that the lower and upper decay rates are equal in most relevant cases but can also differ significantly. As first example, let $\{x_n\}_{n \in \N}$ behave as follows
    \begin{equation*}
        c_1 n^{-\alpha} \ln(n+1)^{\beta_1} \leq x_n \leq c_2 n^{-\alpha} \ln(n+1)^{\beta_2},
    \end{equation*}
    for $n$ large enough, where $\alpha > 0, c_1,c_2 > 0, \beta_1,\beta_2 \in \R$ and $\beta_1 \leq \beta_2$. Then, we show $\declow_{\bsx} = \decup_{\bsx} = \alpha$. For all $\varepsilon > 0$, we have
    \begin{equation*}
        \lim_{n \to \infty} x_n n^{\alpha-\varepsilon} \leq c_2 \lim_{n \to \infty} \frac{\ln(n+1)^{\beta_2}}{n^\varepsilon} = 0.
    \end{equation*}
    Since we consider only positive values for $x_n$, we actually get $\lim_{n \to \infty} x_n n^{\alpha-\varepsilon} = 0$. Hence, using \eqref{def:dec_low2}, we get $\declow_{\bsx} \geq \alpha$. Moreover, for all $\varepsilon > 0$, we also get
    \begin{equation*}
        \inf_{n \in \N} x_n n^{\alpha + \varepsilon} \geq c_1 \inf_{n \in \N} \ln(n+1)^{\beta_1} n^\varepsilon > 0.
    \end{equation*}
    Therefore, using \eqref{def:decay_up}, we have $\decup_\bsx \leq \alpha$. From \RefLem{le:dec_up_low} follows $\declow_{\bsx} = \alpha = \decup_{\bsx}$.
    
    In contrast, define recursively $n_1 = 0$ and $n_{k+1} = 2^{n_k}+1$ and consider the sequence $y_n = 2^{-n_k}$ for $n \in [n_k,n_{k+1})$. In this case, we show $\declow_\bsy = 1$ and $\decup_\bsy = \infty$. Let $0 < \alpha < 1$. Then
    \begin{equation*}
        \sum_{n \in \N} y_n^{1/\alpha} = \sum_{k \in \N} \sum_{n = n_k}^{n_{k+1}-1} 2^{-n_k/\alpha} = \sum_{k \in \N} \frac{2^{n_k} - n_k}{2^{n_k/\alpha}} \leq \sum_{k \in \N} 2^{-n_k(\frac{1}{\alpha}-1)} < \infty,
    \end{equation*}
    which implies $\alpha \leq \declow_\bsy$ and hence $\declow_\bsy \geq 1$. For $\alpha = 1$, the series above diverges.
    Therefore, we have $\declow_\bsy \leq 1$, and altogether $\declow_\bsy = 1$. On the other hand, we have for all $\alpha > 0$
    \begin{equation*}
        \inf_{n \in \N} y_n n^\alpha = \inf_{k \in \N} \min_{n \in [n_k, n_{k+1})} y_n n^\alpha = \inf_{k \in \N} \min_{n \in [n_k, n_{k+1})} \frac{n^\alpha}{2^{n_k}} = \inf_{n \in \N} \frac{n^\alpha}{2^{n}} = 0.
    \end{equation*}
    Consequently, $\decup_\bsy = \infty$.
\end{remark}

\begin{lemma}\label{le:decay_bounds}
    Let $\bsx$ be a non-increasing null sequence of positive reals with lower and upper decay rates denoted by $\declow_\bsx$ and $\decup_\bsx$. Then, for all $p_1 \in (\decup_\bsx,\infty)$ and $p_2 \in (0, \declow_\bsx)$
    \begin{equation*}
        n^{-p_1} \lesssim x_n \lesssim n^{-p_2}.
    \end{equation*}
\end{lemma}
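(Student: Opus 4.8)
The plan is to prove the two one-sided estimates separately, each by unwinding the relevant decay-rate definition and then exploiting the fact that, for fixed $n\ge 1$, the exponent $n^\alpha$ is monotone in $\alpha$. The only preparatory step is to rewrite the lower decay rate in its supremum form: by \RefLem{le:dec_low_equiv} (which applies since $\bsx$ is a non-increasing null sequence), we have $\declow_\bsx = \declow_3 = \sup\{\alpha\ge 0 : \sup_{n} x_n n^\alpha < \infty\}$.

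For the upper bound $x_n \lesssim n^{-p_2}$, I would observe that the set defining $\declow_3$ is \emph{downward closed}: if $\sup_n x_n n^\alpha < \infty$ and $0 \le \beta \le \alpha$, then $x_n n^\beta \le x_n n^\alpha$ for all $n \ge 1$, so $\sup_n x_n n^\beta < \infty$ as well. Hence every $p_2 < \declow_\bsx = \declow_3$ already belongs to this set, producing a finite constant $C := \sup_{n} x_n n^{p_2} < \infty$, and therefore $x_n \le C n^{-p_2}$ for all $n$, which is exactly $x_n \lesssim n^{-p_2}$.

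For the lower bound $n^{-p_1} \lesssim x_n$, I would argue directly from the definition \eqref{def:decay_up}, namely $\decup_\bsx = \inf\{\alpha \ge 0 : \inf_{n} x_n n^\alpha > 0\}$. Symmetrically, this set is \emph{upward closed}: if $\inf_n x_n n^\alpha > 0$ and $\beta \ge \alpha$, then $x_n n^\beta \ge x_n n^\alpha$ for $n \ge 1$, so $\inf_n x_n n^\beta \ge \inf_n x_n n^\alpha > 0$. Thus every $p_1 > \decup_\bsx$ lies in the set, yielding a positive constant $c := \inf_{n} x_n n^{p_1} > 0$, and hence $x_n \ge c n^{-p_1}$, i.e. $n^{-p_1} \lesssim x_n$.

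There is no substantive obstacle here; the one point that warrants care is the monotonicity (downward-, respectively upward-closedness) of the two defining subsets of $[0,\infty)$, which is what guarantees that the strict conditions $p_2 < \declow_\bsx$ and $p_1 > \decup_\bsx$ place $p_2$ and $p_1$ inside the respective sets, so that the relevant supremum is finite and the relevant infimum is strictly positive. Converting the resulting uniform bounds on $x_n n^{p_2}$ and $x_n n^{p_1}$ into the $\lesssim$ and $\gtrsim$ notation is then immediate.
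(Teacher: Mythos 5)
Your proof is correct and follows essentially the same route as the paper: reduce $\declow_\bsx$ to $\declow_3$ via Lemma~\ref{le:dec_low_equiv}, then read off the finite supremum and positive infimum from the definitions. The only difference is that you explicitly verify the downward-/upward-closedness of the defining sets (a point the paper's proof takes for granted), which is a harmless and indeed welcome extra bit of care.
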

\begin{proof}
    From the definition in \eqref{def:decay_up}, we have $c_1 := \inf_{n \in \N} x_n n^{p_1}> 0$. Consequently, we have $x_n n^{p_1} \geq c_1$ and hence $x_n \geq c_1 n^{-p_1}$.

    We have shown in \RefLem{le:dec_low_equiv} that $\declow_\bsx = \declow_3$. Accordingly, from the definition in \eqref{def:dec_low3}, we have $c_2 := \sup_{n \in \N} x_n n^{p_2} < \infty$. This implies that for all $n \in \N$, $x_n n^{p_2} \leq c_2$ and hence $x_n \leq c_2 n^{-p_2}$.
\end{proof}

\bibliographystyle{is-abbrv}
\bibliography{references}
\end{document}